\newtheorem{theorem}{\indent {Theorem}}
\newtheorem{assumption}{\indent {Assumption}}
\newtheorem{lemma}{\indent {Lemma}}
\newtheorem{corollary}{\indent {Corollary}}
\def\R{\mathbb{R}}
\def\1{{\bf 1}}
\def\a{\alpha}
\def\e{{\epsilon}}
\def\bu{{\bf u}}
\def\an#1{#1}
\def\aor#1{{{#1}}}
\newenvironment{proof}[1][Proof]{\noindent\textbf{#1.} }{\ \rule{0.5em}{0.5em}}
\def\an#1{{{#1}}}
\def\re{\mathbb{R}}
\def\e{\epsilon}
\def\bx{{\mathbf x}}
\def\bw{{\mathbf w}}
\def\bz{{\mathbf z}}
\def\bg{{\mathbf g}}
\def\bv{{\mathbf v}}
\def\by{{\mathbf y}}
\begin{document}

\title{Stochastic Gradient-Push for Strongly Convex Functions on Time-Varying Directed Graphs}
\author{Angelia Nedi\'c and Alex Olshevsky\thanks{
The authors are with the Industrial and 
Enterprise Systems Engineering Department, University of Illinois at Urbana-Champaign, 104 S.~Mathews Avenue, Urbana IL, 61801, Emails: $\{$angelia,aolshev2$\}$@illinois.edu. A. Nedi\'c gratefully acknowledges the support by the NSF grants CCF 11-11342, and by the ONR Navy Basic Research Challenge N00014-12-1-0998. A preliminary version of this paper appeared in the Proceedings of the 1st IEEE Global SIP Conference, 2013.}
}
\maketitle

\begin{abstract} We investigate the convergence rate of the recently proposed subgradient-push method 
for distributed optimization over time-varying directed graphs. 
The subgradient-push method can be implemented in a distributed way without requiring knowledge of either 
the number of agents or the graph sequence; each node is only required to know its out-degree at each time. 
Our main result is a convergence rate  
of $O \left((\ln t)/t \right)$  for strongly convex  functions with Lipschitz gradients 
even if only stochastic gradient samples are available; 
this is asymptotically faster than the $O \left(( \ln t)/\sqrt{t} \right)$ 
rate previously known  for (general) convex functions.
\end{abstract}

\section{Introduction} We consider the problem of cooperatively minimizing a separable convex function by a network of nodes. 
Our motivation stems from much recent interest in distributed optimization problems which arise when large clusters of nodes (which can be
sensors, processors, autonomous vehicles or UAVs) wish to collectively optimize a global objective by means of actions taken by each node and local coordination between neighboring nodes. 

Specifically, we will study the problem of optimizing a sum of $n$ convex functions  by a network of $n$ nodes when  the $i$'th function is known only to node $i$. The functions will be assumed to be from $\R^d$ to $\R$. This problem often arises when control and signal processing algorithms are implemented in sensor networks and global agreement is needed on a parameter which minimizes a sum of local costs. Some specific scenarios
in which this problem has been considered in the literature include statistical inference \cite{rabbat}, formation control~\cite{othesis},  non-autonomous power control~\cite{ram_info}, distributed ``epidemic'' message
routing in networks~\cite{neglia}, and spectrum access coordination~\cite{li-han}.

Our focus here is on the case when the communication topology connecting the nodes is {\em time-varying} and {\em directed}. In the context of wireless networks, time-varying communication topologies arise if the nodes are mobile or if the communication between them is subject to unpredictable bouts of interference. Directed communication links are also a natural assumption as in many cases there is no reason to expect different nodes to transmit wirelessly at the same power level. 
Transmissions at different power levels will result \an{in} unidirectional communication between nodes (usually, after an initial \aor{bidirectional} exchange of ``hello'' messages). 

In our previous work \cite{AA2013} we proposed an algorithm which is guaranteed to drive all nodes to an optimal solution in this setting. Our algorithm, which we called the {\em subgradient-push}, can be implemented in a fully distributed way: no knowledge of the (time-varying) communication topology or even of the total number of nodes is required, although every node is required to know its out-degree at each time. 
The subgradient-push is a generalization of the so-called push-sum protocol for computing averages on directed graphs proposed over a decade ago~\cite{dobra-kempe} (see also the more recent development in~\cite{benezit, dominguez}). 

Our main result in~\cite{AA2013} was that the subgradient-push protocol drives all the nodes 
to an optimal solution at a rate $O \left( (\ln t)/\sqrt{t} \right)$. Here, 
we consider the effect of stronger assumptions on the individual functions. 
Our main result is that if the functions at each node are strongly convex, %and have Lipschitz gradients, 
then even if each node only has access to noisy gradients of its own function, 
an improvement to an $O \left( (\ln t)/t \right)$ rate can be achieved. 

\aor{Note that our convergence rate is quite close to best achievable rate of $O(1/t)$ in (centralized) strongly convex  optimization with noisy gradient samples of 
bounded variance \cite{NY, ABRW}.  Obtaining an algorithm with a $O(1/t)$ rate in our setting of distributed, noisy, strongly-convex optimization
over time-varying directed graphs of unknown size remains an open problem.}

Our work here contributes to the growing literature on
distributed methods for optimization over networks~\cite{AN2009, johan1, rabbat, johansson, LopesSayed2007, johan2, Lobel2011,  LobelOF2011, SN2011, nedic-broadcast,ChenSayed2012,   Duchi2012, Lu2012, gharesi2, rabbat_2013, ciblat}. 
It is a part of a recent strand of the distributed optimization literature which studies effective protocols  when interactions between nodes are unidirectional~\cite{hadj1, hadj2, hadj3, gharesi1, gharesi3, rabbat_allerton2012}. Our work is most closely related to recent developments in~\cite{rabbat_allerton2012, rabbat_cdc2012,Tsianos2011, gharesi1, Tsianos2013, ermin}. We specifically mention \cite{rabbat_allerton2012, rabbat_cdc2012}, which were the first papers to suggest the use of push-sum-like updates for optimization over directed graphs as well as~\cite{Tsianos2012, ermin} which derived O($1/t)$ convergence rates in the less stringent setting when every graph is fixed and undirected. 

Our paper is organized as follows. In Section~\ref{sec:results}, we describe the problem formally and present
the algorithm along with the main results. The results are then proved 
\an{in Sections~\ref{sec:proof1}} and~\ref{sec:proof0}. 
We conclude with some simulations in Section~\ref{sec:nums} and some concluding remarks in Section~\ref{sec:concl}.

\noindent
{\bf Notation}: 
We use boldface to distinguish between the vectors in $\R^d$ and scalars associated 
with different nodes. 
For example, the vector $\bx_i(t)$ is in boldface to identify a vector for node $i$, 
while a scalar $y_i(t)\in\R$ is not in boldface. 
The vectors such as $y(t)\in\R^n$ obtained by stacking scalar values $y_i(t)$ associated with the $n$ nodes 
are not in boldface. For a vector $y$, we will also sometimes use $[y]_j$ to denote its $j$'th entry.
For a matrix $A$, we will use $A_{ij}$ or $[A]_{ij}$ to denote its $i,j$'th entry. 
We use $\1$ to denote the vector of ones, and $\| y\|$ for the Euclidean norm of a vector~$y$. 

%%%%%%%%%%%%%%%%%%%%%%%%%%%%%%%%%%%%%%%
\section{Problem, Algorithm and Main Result\label{sec:results}} 
%%%%%%%%%%%%%%%%%%%%%%%%%%%%%%%%%%%%%%%
We consider a network of $n$ nodes which would like to collectively solve the following 
minimization problem: 
\[ \hbox{minimize } F(\bz) \triangleq \sum_{i=1}^n f_i( \bz) \quad\hbox{over $\bz\in\mathbb{R}^d$},\] 
where only node $i$ has any knowledge of the  convex function $f_i: \R^d \to \R$. 
Moreover, we assume that node $i$ has access to the 
convex function $f_i:\mathbb{R}^d\to\mathbb{R}$ only 
through the ability to generate noisy samples of its subgradient, 
i.e., given a point $\bu \in \R^d$ node $i$ can generate 
\begin{align}\label{eq:noisy-grad}
\bg_i( {\bf u}) = \aor{\nabla f_i({\bf u})} + {\bf N}_i({\bf u}),
 \end{align} 
where \aor{$\nabla f_i({\bf u})$ denotes} a subgradient of $f_i$ at ${\bf u}$ and 
${\bf N}_i(\bu)$ is an independent random vector with zero mean, i.e., 
$\mathbb{E}[{\bf N}_i({\bf u})] = 0$. 
We assume the noise-norm $\|{\bf N}_i(\bu)\|$ is almost surely bounded, i.e., 
\an{for every $i$, there is a scalar $c_i>0$ such that
every time a noisy subgradient is generated}  we have with probability~1,
\begin{align}\label{eq:noise}
\| \aor{{\bf N}_i({\bf u})\|\le c_i\quad\hbox{for all $\bu\in\R^d$.}}
\end{align} 

We make the assumption  that at each time $t$,  node $i$ can only send messages to its out-neighbors in some
directed graph $G(t)$, where the graph $G(t)$ has vertex set $\{1, \ldots, n\}$  and edge set  $E(t)$. 
We will be assuming that the sequence $\{G(t)\}$ is $B$-strongly-connected, which means
that there is a positive integer $B$ such that the graph with edge set 
\[E_B(k) =  \bigcup_{i=kB}^{(k+1)B-1} E(i) \] 
is strongly connected for each $k \geq 0$. 
Intuitively, we are assuming the time-varying network $G(t)$ must be 
repeatedly connected over sufficiently long time scales. 

We use $N^{\rm in}_i(t)$ and $N^{\rm out}_i(t)$ denote the in- and out-neighborhoods 
of node $i$ at time $t$, respectively, where by convention node $i$ is always considered to be an in- and out- neighbor of itself,
so $i \in N^{\rm in}(i)(t), i \in N^{\rm out}(t)$ for all $i,t$. 
We use $d_i(t)$ to denote the out-degree of node $i$, and we assume
that every node $i$ knows its out-degree $d_i(t)$ at every time $t$. 

We will analyze a version of the subgradient-push method of~\cite{AA2013}, where
each node $i$ maintains
vector variables $\bz_i(t), \bx_i(t), \bw_i(t)\in\R^d$,  as well as a scalar variable 
$y_i(t)$. These
quantities are updated according to the following rules:
for all $t\ge0$ and all $i=1,\ldots,n$, 
\begin{align}\label{eq:minmet} 
\bw_i(t+1) & =  \sum_{j \in N_i^{\rm in}(t)} \frac{\bx_j(t)}{d_j(t)},\cr
y_i(t+1) & =  \sum_{j \in N_i^{\rm in}(t)} \frac{y_j(t)}{d_j(t)}, \cr
\bz_{i}(t+1) & =  \frac{\bw_{i}(t+1)}{y_{i}(t+1)},\cr
\bx_i(t+1) &= \bw_i(t+1) - \alpha(t+1)  \bg_i(t+1),  
\end{align} 
where the variables $y_i(t)$ are initialized as  $y_i(0)=1$ for all $i$. 
Here, we use $\bg_i(t+1)$ to abbreviate the notation $\bg_i(\bz_i(t+1))$ (see Eq.~\eqref{eq:noisy-grad}). 
The positive stepsize $\alpha(t+1)$ will be specified later. 

These updates have a simple physical implementation: each node $j$ broadcasts the 
quantities $\bx_j(t)/d_j(t), y_j(t)/d_j(t)$ to all of the nodes $i$ in its out-neighborhood. Each neighbor 
$i$ then sums the received messages to obtain $\bw_i(t+1)$ and $y_i(t+1)$. 
The updates of $\bz_i(t+1), \bx_i(t+1)$  \aor{then} do not require any \aor{additional} communications 
among the nodes at step~$t$. 

\an{
To provide insights into the method, lets us focus on push-sum method for distributed averaging.
Suppose we have a homogeneous and ergodic Markov chain with a single recurrent class, and let 
$A$ be its transition matrix ($A$ is column-stochastic). 
Also, suppose the nodes of the chain have some initial values $x_i(0)\in\re$. Let $x(0)$ be a vector of these values, 
and consider the following linear dynamic, initiated with $x(0)$:
\[x(t+1)=Ax(t).\]
Since the chain is ergodic, the matrices $A^t$ converge to a rank-one matrix with identical columns, i.e.,
$\lim_{t\to\infty} A^t = \pi \1'$, where $\pi$ is a stochastic vector with $\pi_i>0$ for all $i$.
Therefore, for $x(t)$ we have 
\[\lim_{t\to\infty} x(t)=\left(\lim_{t\to\infty} A^t \right)x(0)=\1'x(0)\,\pi.\]
Suppose now, we replicate the dynamics from a different initial state, say $y(0)\in\re^n$, and we let 
$y(t+1)=Ay(t)$. Then, we have 
\[\lim_{t\to\infty} y(t)=\left(\lim_{t\to\infty} A^t \right)y(0)=\1'y(0)\,\pi.\]
Consider the coordinate-wise ratio of the vectors $x(t)$ and $y(t)$. The limits of these ratios satisfy the following relation
\begin{equation}\label{eq:ratio}\lim_{t\to\infty} \frac{x_i(t)}{y_i(t)}=\frac{\1'x(0)}{\1'y(0)},\end{equation}
which relies on the fact that $\pi_i>0$ (these values cancel out in the ratio). Thus, any influence that the chain may induce, as reflected in different steady state values $\pi_i$, do not appear in the limiting ratios of $x_i(t)/y_i(t)$. 
Furthermore, as indicated by Eq.~\eqref{eq:ratio}, if we set $y(0)=\1$, we obtain
\[\lim_{t\to\infty} \frac{x_i(t)}{y_i(t)}=\frac{\1'x(0)}{n},\]
showing that the ratios $x_i(t)/y_i(t)$ approach the initial average as $t\to\infty$.
When the underlying Markov chain is time-varying (i.e., the matrix $A$ is time-varying, then one would expect
that the limits of the ratios $x_i(t)/y_i(t)$ track the running averages $\frac{\1'x(t)}{n}$ with increasing accuracy.}

\an{The algorithm~\eqref{eq:minmet} is motivated by the insight that 
the ratios $x_i(t)/y_i(t)$ can track the running averages $\frac{\1'x(t)}{n}$ 
with the accuracy that can been characterized by the 
connectivity stricture of the underlying graphs. Additionally, the running averages are controlled by the "gradient" field in order  to move them toward the set of optimal solutions of the problem 
of our interest. Specifically, in the light of the above discussion, the updates $\bx_i(t+1)$ and $y_i(t+1)$ in the algorithm~\eqref{eq:minmet} 
correspond to updates of vector-variables of the nodes, where $y$ variables serve to cancel out the effects of scaling
(which is due to a time-varying Markov chain, as reflected by the graph structure).
The updates of $z_i(t+1)$ in algorithm~\eqref{eq:minmet} are just keeping track of the ratios at the nodes 
(as these will be consenting in a long run). 
The last update step in algorithm~\eqref{eq:minmet}
is basically forcing the consensus point to asymptotically approach an optimal solution of the problem.} 

%For more details on subgradient-push and its motivation we refer the reader to 
%the paper~\cite{AA2013}. \aor{Here we only briefly mention the intuition behind the method. 
%A natural approach to distributed optimization is to combine a subgradient descent at each node 
%with some kind of consensus iteration which brings the nodes closer together. Unfortunately,
%doing this runs the possibility of favoring 
%the subgradients of those nodes at ``central'' positions in the graph which can strongly influence 
%the remaining nodes. One solution is to mix gradient descent with a consensus update using a doubly
%stochastic matrix; this succeeds at ensuring that all nodes are treated equally, but unfortunately
%there is no way to implement this update over time-varying directed graphs. The subgradient-push solves the problem by keeping track of 
%the auxiliary scalars $y_i(t)$, which are then used to rescale the subgradient updates so that,} 
%\an{in a long run, the subgradients are weighted equally at each update.}

Our previous work in~\cite{AA2013} provided a rate estimate for a suitable averaged 
version of the variables ${\bf z}_i(t)$ with the  stepsize choice
$\a(t)=1/\sqrt{t}$. In particular, we showed in [15] that, for each $i=1, \ldots, n$, 
a suitably averaged version of 
${\bf z}_i(t)$ converges to the same global minimum of the function $F(z)$ at a rate of $O((\ln t)/\sqrt{t})$.
Our main contribution  in this paper is an improved convergence rate estimate $O( (\ln t)/t)$ under \an{the strong convexity}
assumption on the functions $f_i$.

Recall that a convex function $f:\R^d\to\R$ is $\mu$-strongly convex  with $\mu > 0$
if the following relation holds for all $\bx,\by\in\R^d$:
\[f(\bx) - f(\by)\ge g'(\by)(\bx-\by)+\frac{\mu}{2}\|\bx-\by\|^2,\]
where $g(\by)$ is any subgradient of $f(\bz)$ at $\bz=\by$. 
%Note that a function
%is not required to be differentiable to be strongly convex.

We next provide precise statements of our improved rate estimates.   
For convenience, we define 
 $$\bar \bx(t)= \frac{1}{n} \sum_{j=1}^n {\bf x}_j(t) $$ 
to be the vector which averages all the $\bx_j(t)$ at each node. 
Furthermore, 
let us introduce some notation for the assumptions we will be making.

\begin{assumption} \label{assume:main1}$~$ \\
(a)
The graph sequence $\{G(t)\}$ is $B$-strongly-connected. \\
(b)
Each function $f_i$ is $\mu_i$-strongly convex with $\mu_i > 0$. 
\end{assumption}

%Assumption 1(a) is clearly necessary in order to obtain the requisite repeated connectivity in the network. 
Note that Assumption~1(b) implies the existence of a unique global minimizer ${\bf z^*}$ of $F({\bf z})$.

One of our technical innovations will be to resort to a somewhat unusual type of averaging
motivated by the work in~\cite{ANSL2013}. 
Specifically, we will require each node to maintain the variable $\widehat \bz_i(t) \in \R^d$ defined by
\begin{equation}\label{eq:delayed-aver}
{\widehat \bz}_i(t)=\frac{\sum_{s=1}^{t} (s-1) \bz_i(s)}{t(t-1)/2}\quad\hbox{for $t\ge 2$}.\end{equation}
This can easily be done recursively, e.g., by  setting $\widehat \bz_i(1)=\bz_i(0)$ and updating as
\begin{equation}\label{eq:aver-seq} \widehat \bz_i(t+1) = \frac{t\bz_i(t+1) + S(t) \widehat \bz_i(t)}{S(t+1)}
\quad\hbox{for $t\ge1$},  \end{equation} where $S(t) = t(t-1)/2$ for $t\ge2$. 

We are now ready to state our first main result, which deals with the speed at which 
the averaged iterates $\widehat \bz_i(t)$ we have just described converge to
the global minimizer ${\bf z}^*$ of $F({\bf z})$.

\begin{theorem} \label{thm2} 
Suppose Assumption 1 is satisfied and 
$\alpha(t) =\frac{p}{t}$ for $t\ge1$, where the constant $p$ is such that 
\begin{equation} \label{eq:step} p\,\frac{\sum_{i=1}^n \mu_i}{n}\ge 4. \end{equation} 
Suppose further that there exists a scalar $D$ such that with probability $1$,
$\sup_t \|{\bf z}_i(t)\| \leq D$.  Then, we have for all $i=1, \ldots, n$,  \aor{
\begin{align*} 
&\mathbb{E} 
\left[ F({\widehat \bz}_i(\tau))-F(\bz^*) +\sum_{j=1}^n \mu_j \|{\widehat \bz}_j(\tau) - \bz^*\|^2 \right] \cr
&\le  \frac{80  L }{\tau \delta}\,\frac{\lambda}{1-\lambda} \sum_{j=1}^n\|\bx_j(0)\|_1 \cr 
& +\frac{80p L n \max_j B_j  }{\tau \delta (1 - \lambda) }  (1+\ln (\tau-1))   
+ \frac{p }{\tau }\sum_{j=1}^n \an{(L_j +c_j)^2},\end{align*} }
\an{where $L_i$ is the largest-possible Euclidean norm of any subgradient of $f_i$ on the ball of radius $D$ around 
the origin, $L=\sum_{j=1}^n L_j$, $B_i=\sqrt{d}(L_i+c_i) $}, 
% $B_i = \aor{\sqrt{d}( L_i +  c_i)}$, not sure why we need the sort{d}??
while the scalars $\lambda \in (0,1)$ and $\delta > 0$
are functions of the graph sequence $\{G(t)\}$ 
which satisfy
\begin{align*} \delta \geq  \frac{1}{n^{nB}}, \qquad
\lambda \leq \left( 1 - \frac{1}{n^{nB}} \right)^{1/(nB)}.
\end{align*}
Moreover, if each of the graphs $G(t)$ is regular\footnote{A directed graph $G(t)$ is regular if  
every out-degree and every in-degree of a node in $G(t)$ equals $d(t)$ for some $d(t)$.}, then 
\begin{align*} 
\delta  =  1, \  
\lambda \leq 
\min\left\{\left( 1 - \frac{1}{4n^3} \right)^{1/B} , \max_{t\ge0} \sigma_2(A(t)) \right\},
\end{align*} where $A(t)$ is defined by 
\begin{equation}\label{eq:defA}
A_{ij}(t) =\left\{
\begin{array}{ll}
1/d_j(t) & \hbox{whenever $j \in N_i^{\rm in}(t)$},\cr 
0 &\hbox{otherwise}.\end{array}\right.
\end{equation} 
and $\sigma_2(A)$ is the second-largest singular value of  $A$.
\end{theorem}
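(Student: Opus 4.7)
The plan decomposes the argument into three pieces: a Lyapunov recursion for the network average $\bar\bx(t) \triangleq \frac{1}{n}\sum_{j=1}^n \bx_j(t)$, a perturbed consensus-error bound on $\|\bz_i(t) - \bar\bx(t)\|$, and a weighted-averaging step that converts the resulting per-step inequality into the $O((\ln\tau)/\tau)$ rate for the iterates $\widehat\bz_i(\tau)$.

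First I would use column-stochasticity of the push-sum step (each $\bx_j(t)/d_j(t)$ is forwarded to exactly $d_j(t)$ out-neighbors, so vector sums are preserved) to obtain the clean recursion $\bar\bx(t+1) = \bar\bx(t) - \frac{\alpha(t+1)}{n}\sum_{i=1}^n \bg_i(t+1)$. Expanding $\|\bar\bx(t+1) - \bz^*\|^2$, conditioning on the natural filtration so the zero-mean noise drops out of the cross term, and applying $\mu_i$-strong convexity of $f_i$ at the evaluation point $\bz_i(t+1)$ should yield a per-step inequality of the form
\begin{align*}
&\mathbb{E}\bigl[\|\bar\bx(t+1) - \bz^*\|^2 \mid \mathcal{F}_t\bigr] \\
&\le \Bigl(1 - \tfrac{2\alpha(t+1)}{n}\sum_j \mu_j\Bigr)\|\bar\bx(t) - \bz^*\|^2 \\
&\quad - \tfrac{2\alpha(t+1)}{n}\bigl[F(\bz_i(t+1)) - F(\bz^*)\bigr] + R^{\rm cons}(t) + R^{\rm noise}(t),
\end{align*}
where $R^{\rm cons}(t)$ collects perturbations of order $\alpha(t+1)\,L\,\max_j\|\bz_j(t+1) - \bar\bx(t)\|$ from evaluating each subgradient at $\bz_j(t+1)$ rather than at $\bar\bx(t)$, and $R^{\rm noise}(t)$ is $O(\alpha(t+1)^2\sum_j(L_j+c_j)^2/n^2)$ from $\mathbb{E}\|\sum_i\bg_i(t+1)\|^2$.

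Second, I would import the perturbed push-sum consensus bound from \cite{AA2013} to control $\|\bz_i(t) - \bar\bx(t)\|$ by an initialization-geometric piece $(\lambda^t/\delta)\sum_j\|\bx_j(0)\|_1$ plus a convolution $(1/\delta)\sum_s \lambda^{t-s}\alpha(s)\max_j B_j$. The spectral constants $\lambda,\delta$ and their sharper regular-graph values involving $\sigma_2(A(t))$ come from the standard analysis of backward products of the column-stochastic matrices $A(t)$ along any $B$-strongly-connected sequence, and are reused verbatim.

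Third, I would multiply the per-step inequality by the weight $s$ and use $\alpha(t) = p/t$ with the hypothesis $p\sum_j\mu_j/n \ge 4$, so that $s\bigl(1 - \tfrac{2\alpha(s+1)}{n}\sum_j\mu_j\bigr) \le s-1$ and the $\|\bar\bx(s) - \bz^*\|^2$ terms telescope once summed from $s=1$ to $\tau-1$. What remains on the left is $\sum_s s\,\mathbb{E}[F(\bz_i(s+1)) - F(\bz^*)]$ together with the $\sum_j\mu_j\|\bz_j(s+1) - \bz^*\|^2$ remainder I would retain from strong convexity rather than discard, and what remains on the right is the accumulated consensus contribution plus $\sum_s s\,\alpha(s+1)^2 = O(\ln\tau)$. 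A Jensen step using the weights $(s-1)/(\tau(\tau-1)/2)$ then converts the left side into $F(\widehat\bz_i(\tau)) - F(\bz^*) + \sum_j\mu_j\|\widehat\bz_j(\tau) - \bz^*\|^2$ and yields the $1/\tau$ prefactor. The main obstacle will be the double sum $\sum_{s=1}^{\tau-1} s\,\alpha(s+1)\sum_{r\le s}\lambda^{s-r}\alpha(r)$ arising when $R^{\rm cons}$ is expanded via the consensus bound: interchanging the order of summation and using $\alpha(s) = p/s$ must collapse the inner geometric series to a factor $\lambda/(1-\lambda)$ while the outer sum produces exactly the $(1+\ln(\tau-1))$ factor in the theorem, and keeping the numerical constants aligned with the $80/(\tau\delta)$ prefactors in the statement is a matter of careful book-keeping rather than deep difficulty.
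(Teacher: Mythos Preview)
Your three–step decomposition—Lyapunov recursion for $\bar\bx(t)$ via column-stochasticity, the perturbed push-sum consensus bound from \cite{AA2013}, then weighted telescoping plus Jensen—is exactly the paper's strategy (its Lemma~2, Corollaries~1--2, and the proof of Theorem~1 respectively).

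Two small corrections are needed in the third step. First, the contraction you wrote is too strong: one cannot obtain both $-\tfrac{2\alpha}{n}\sum_j\mu_j\|\bar\bx(t)-\bz^*\|^2$ and $-\tfrac{2\alpha}{n}\bigl(F(\bz_i(t+1))-F(\bz^*)\bigr)$ from a single application of strong convexity at $\bz_j(t+1)$. The paper instead splits $2\bigl(F(\bar\bx(t))-F(\bz^*)\bigr)$ into a half yielding $\tfrac{1}{2}\sum_j\mu_j\|\bar\bx(t)-\bz^*\|^2$ and a half transferred (via Lipschitzness) to $F(\bz_i(t+1))-F(\bz^*)$; the hypothesis $p\sum_j\mu_j/n\ge4$ is calibrated so that this weaker contraction is exactly $\tfrac{2}{t+1}$. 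Second, you must multiply the per-step inequality by $t(t+1)$, not by $t$: this makes the telescoping exact since $t(t+1)\bigl(1-\tfrac{2}{t+1}\bigr)=t(t-1)$, and—crucially—it makes the weight on the $\bigl(F+\sum_j\mu_j\|\bz_j-\bz^*\|^2\bigr)$ term equal $t(t+1)\cdot\tfrac{p}{n(t+1)}=\tfrac{pt}{n}$, i.e.\ proportional to $t$, which is precisely what the Jensen step needs to produce $\widehat\bz_i(\tau)$ as defined. Multiplying only by $t$ would leave weights $t/(t+1)$, which are essentially constant and do not match the definition of $\widehat\bz_i$. With these fixes your double-sum calculation collapses just as you describe (the paper handles it by the crude bound $t\le\tau-1$ followed by Corollary~2, equivalent to your interchange of summation).
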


Note that each term on the right-hand side of the bound in the above theorem has a $\tau$ in the
denominator and \an{an} $\ln (\tau-1)$ or a constant in the numerator.  The convergence time above should therefore be interpreted as proving a decay with time which 
decreases at an expected $O( (\ln t)/t)$ rate with the number of iterations~$t$. Note that this is an extended and corrected version of a result from
the conference version of this paper~\cite{our_conf}.

\aor{We remark that our result is new even for undirected graphs, which are included as a special case of the 
above theorem; indeed, to our knowledge, we are the first to demonstrate a decay rate of $O( (\ln t)/t)$ for stochastic gradient
descent over time-varying undirected graphs (however, recall our earlier discussion of \cite{Tsianos2012, ermin} which derived similar decay rates for the deterministic case with a fixed undirected graph). Surprisingly, the undirected and directed cases do not appear to be very different; the main difference seems to do with the constant $\delta$ and $\lambda$. Indeed, note that} the bounds for the constants
$\delta$ and $\lambda$  which appear in the bound are rather large in the most  general case; in particular, they grow exponentially in
the number of nodes $n$. At present, this scaling appears unavoidable: those constants reflect the best available bounds
on the performance of average consensus protocols in directed graphs, and  it is an open question whether 
average consensus on directed graphs can be done in time polynomial in $n$. In the case of regular graphs, the bounds scale polynomially in $n$ due to the availability of good bounds on the convergence of consensus.  \aor{Similarly, for undirected case, the possibility of modifyin the protocol by instead choosing a  symmetric matrix $A$ leads to good bounds on $\delta$ and $\lambda$ \cite{noot09}}. Our results therefore further motivate problem of finding consensus algorithms with good convergence times, \aor{especially on directed graphs}.

Finally, we remark that 
choosing a stepsize parameter $p$ so that Eq.~(\ref{eq:step}) is satisfied is most easily done by instead insuring that 
$p (\min_i \mu_i) /n > 4$. This is a more conservative condition than 
that of Eq.~(\ref{eq:step}) but ensuring it requires the nodes only to compute $\min_i \mu_i$. This is more convenient because the minimum of any collection of numbers $r_1, \ldots, r_n$ (with $r_i$ stored at node $i$) can be easily computed by the following distributed protocol: node $i$ sets its initial value to $r_i$ and then
repeatedly replaces its value with the minimum of the values of its in-neighbors. It is easy to see that on any fixed network, this process 
converges to the minimum in as many steps as the diameter. Furthermore, on any $B$-strongly-connected sequence this processes 
converges in the optimal $O(nB)$ steps. Thus, the pre-processing required to come up with a suitable step-size parameter $p$ is reasonably small. 

A shortcoming of Theorem 1 is that we must {\em assume} that the iterates ${\bf z}_i(t)$ remain bounded (as opposed to obtaining this as a by-product of the theorem).  This is a common situation in the analysis of subgradient-type methods in non-differentiable optimization: the boundedness of the iterates or their subgradients often needs to be assumed in advance in order to obtain a result about convergence rate. 

We next show that we can remedy this shortcoming at the cost of imposing additional assumptions on the functions $f_i$, namely that they are differentiable and their gradients are Lipschitz.

\begin{assumption} \label{assume:main2} Each $f_i$ is differentiable and its gradients
are  Lipschitz continuous, i.e., for a scalar $M_i>0$,
\[\|\nabla f_i(\bx)-\nabla f_i(y)\|\le M_i \|x-y\|\qquad\hbox{for all $x,y\in\R^d$}.\]
\end{assumption}

\begin{theorem} \label{thm0} 
Suppose that Assumption 1 and Assumption 2 hold and suppose $\lim_{t\to \infty}\alpha(t)=0$. 
Then, there exists a scalar $D$ such that
with probability $1$, $\sup_{t} \|\bz_i(t)\| \leq D$ for all $i$.
\end{theorem}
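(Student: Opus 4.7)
The plan is to exploit the column-stochastic structure of the update \eqref{eq:minmet} together with the linear gradient growth afforded by Assumption~2, and to close a self-bounding Lyapunov argument driven by strong convexity. Summing the fourth line of \eqref{eq:minmet} over $i$ and using that each column of the weight matrix sums to one gives the clean average recursion
\[
\bar\bx(t+1)\;=\;\bar\bx(t)\;-\;\frac{\alpha(t+1)}{n}\sum_{i=1}^{n}\bg_i(t+1),
\]
so that the push-sum mixing drops out of the average. The analysis of \cite{AA2013} also furnishes, pathwise, a perturbed mixing bound of the form
\[
\|\bz_i(t+1)-\bar\bx(t)\|\;\le\;\frac{C\lambda^{t}}{\delta}\sum_{j}\|\bx_j(0)\|_1\;+\;\frac{C}{\delta}\sum_{s=1}^{t}\lambda^{t-s}\alpha(s)\max_{j}\|\bg_j(s)\|.
\]
Under Assumption~2 and the noise bound \eqref{eq:noise}, the noisy subgradient satisfies a pathwise estimate $\|\bg_j(s)\|\le a+b\,\|\bz_j(s)\|$ for some constants $a,b$ depending only on $c_j$, $M_j$ and $\|\nabla f_j(\bz^*)\|$, so the consensus error is dominated by a geometrically weighted sum of past iterate magnitudes.

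I would then study $V(t)=\|\bar\bx(t)-\bz^*\|^2$, where $\bz^*$ is the unique minimizer of $F$ guaranteed by Assumption~1(b). Using the average recursion, strong convexity of $F$ (which gives $\langle \nabla F(\bar\bx(t)),\bar\bx(t)-\bz^*\rangle\ge \mu_F\|\bar\bx(t)-\bz^*\|^2$ with $\mu_F=\sum_i \mu_i$), Young's inequality on the Lipschitz gradient mismatch $\sum_i\|\nabla f_i(\bz_i(t+1))-\nabla f_i(\bar\bx(t))\|\le \sum_i M_i\,\|\bz_i(t+1)-\bar\bx(t)\|$, and the almost-sure boundedness of ${\bf N}_i$, I would derive a pathwise inequality
\[
V(t+1)\;\le\;(1-c_1\alpha(t+1))\,V(t)\;+\;c_2\alpha(t+1)\sum_i\|\bz_i(t+1)-\bar\bx(t)\|^2\;+\;c_3\alpha(t+1)^2\bigl(1+\max_j\|\bz_j(t+1)\|^2\bigr),
\]
in which the Lipschitz-driven term $\max_j\|\bz_j(t+1)\|^2$ feeds back into $V$ and into the consensus errors already bounded above.

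To close the loop I would combine $V(t)$ with a geometric tail of past squared iterate magnitudes into a composite Lyapunov function $\Phi(t)$, with the weight tuned so that the $c_2\alpha(t+1)$ consensus term is absorbed into the tail's $\lambda$-decay. Since $\alpha(t)\to 0$, there is a time $t_0$ past which $c_3\alpha(t)^2$ is much smaller than $c_1\alpha(t)$, so the strong-convexity-driven linear drift strictly dominates the quadratic Lipschitz feedback. For $t\ge t_0$ this produces an inequality of the form
\[
\Phi(t+1)\;\le\;\bigl(1-\tfrac{c_1}{2}\alpha(t+1)\bigr)\Phi(t)\;+\;C\,\alpha(t+1),
\]
whose solutions stay below $\max\{\Phi(t_0),2C/c_1\}$ uniformly in $t$ and sample path. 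This yields $\sup_t\|\bar\bx(t)\|<\infty$ and $\sup_{t,j}\|\bz_j(t)-\bar\bx(t-1)\|<\infty$ almost surely, and combined with the strict positivity $y_j(t)\ge \delta>0$ implied by Assumption~1(a) and column stochasticity (as established in \cite{AA2013}), the identity $\bz_j(t)=\bw_j(t)/y_j(t)$ delivers the asserted uniform bound $\|\bz_j(t)\|\le D$ for every $j,t$.

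The main obstacle is exactly the bootstrapping embedded above: the only control on $\|\bg_j\|$ is linear in the iterate, so both the $V$-recursion and the perturbed push-sum mixing bound contain terms that a priori grow with the very quantity we are trying to control. The crux is the cancellation enabled by $\alpha(t)\to 0$, which eventually forces the strong-convexity contraction to outweigh the Lipschitz-induced quadratic feedback. If the composite Lyapunov argument proves delicate to set up directly, I would instead localize at the stopping time $\tau_K=\inf\{t:\max_j\|\bz_j(t)\|>K\}$, establish the inequality up to $\tau_K$, and then use the uniform bound obtained on $\{t<\tau_K\}$ to argue that $\tau_K=\infty$ almost surely for every sufficiently large $K$.
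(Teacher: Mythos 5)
Your route is workable but it is genuinely different from the paper's. You bound the network average via a strong-convexity Lyapunov function $V(t)=\|\bar\bx(t)-\bz^*\|^2$, feed in the perturbed push-sum mixing bound with perturbations $\alpha(s)\bg_j(s)$ that grow linearly in $\|\bz_j(s)\|$, and close the self-referential loop by a composite (small-gain) Lyapunov argument, using $\alpha(t)\to 0$ so that the $-c_1\alpha V$ drift eventually dominates the $O(\alpha^2)$ and consensus-squared feedback; the stopping-time localization you mention is a sound way to make that bootstrap rigorous. The paper instead never touches $\bar\bx$ or any Lyapunov function: it works per node and pathwise with the maximum norm $\max_i\|\bz_i(t)\|$. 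Its two ingredients are (i) a lemma showing that a single noisy gradient step $v\mapsto v-\alpha(\nabla q(v)+\phi(v))$ on a strongly convex function with Lipschitz gradient does not increase $\|v\|$ outside a fixed compact set once $\alpha\le\mu/(8M^2)$, applied to the ratios $\bx_i(t)/y_i(t)=\bz_i(t)-(\alpha(t)/y_i(t))\bg_i(t)$ (using $y_i(t)\ge\delta$ so these stepsizes eventually qualify), and (ii) the observation (Lemma~\ref{lem:column-stoch}) that the push-sum ratios satisfy $\bz_i(t+1)=\sum_j Q_{ij}(t)\,\bx_j(t)/y_j(t)$ with $Q(t)$ row stochastic, so $\max_i\|\bz_i(t+1)\|$ cannot exceed $\max_j\|\bx_j(t)/y_j(t)\|$; a one-line induction then gives boundedness after a deterministic time $\tau$, and the iterates before $\tau$ are bounded deterministically by iterating the linear-growth recursion finitely many times. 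The paper's argument buys simplicity and an explicit deterministic $D$ with no coupled-recursion bookkeeping and no need for $\lambda$ at all, while your argument, if the small-gain step is carried out carefully, additionally yields quantitative control of $\|\bar\bx(t)-\bz^*\|$ and is closer in spirit to the machinery used for Theorem~\ref{thm2}. One loose end you should make explicit: to get a single scalar $D$ valid with probability $1$ you must bound $\Phi(t_0)$ by a deterministic constant, which follows (as in the paper's closing paragraph) from the deterministic initial points, the almost-sure noise bound, and the linear gradient growth over the finite horizon $[0,t_0]$.
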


The proof of this theorem is constructive in the sense than an explicit expression for $D$ can be derived in terms of the 
\an{level set} growth of the functions $f_j$. \aor{Additionally, the scalar $D$ depends on the initial points $\bx_i(0)$, the step-size sequence $\alpha(t)$,
the functions $f_i(\cdot)$,} \an {the Lipschitz constants $M_j$ and the noise bounds $c_j$ (cf.~\eqref{eq:noise}). }

 Putting Theorems 1 and 2 together, we obtain our main result: for strongly convex functions with Lipschitz gradients, the \aor{stochastic} (sub)gradient-push with appropriately chosen step-size and averaging strategy
converges at an $O((\ln t)/t)$ rate.

%%%%%%%%%%%%%%%%%%%%%%%%%%%%%%%%%
\section{Proof of Theorem \ref{thm2}}\label{sec:proof1}
%%%%%%%%%%%%%%%%%%%%%%%%%%%%%%%%%

We briefly sketch the main ideas of the proof of Theorem \ref{thm2}. First, we will argue that if the subgradient terms in the subgradient-push protocol are bounded, then as a consequence of the decaying stepsize $\alpha(t)$, the protocol will achieve consensus. We will then analyze the evolution of the average $\bar \bx(t)$ and show that, as a consequence of the protocol achieving consensus,  $\bar \bx(t)$ satisfies approximately the same recursion as the iterates of the ordinary subgradient method. Finally, \aor{the key idea in our proof is the observation in}~\cite{ANSL2013}  that, for a noisy gradient update on a strongly convex function, a decay of $O(1/t)$ 
can be achieved by a simple averaging of iterates that places more weight on recent iterations, 
specifically \an{by weighting the $t$'th iterate proportional to $t$}. \aor{Here, we show that, for the perturbed subgradient method which is followed by} 
\an{the averaging step $\bar \bx(t)$, a nearly identical rate $O((\ln t)/t)$ can be achieved.}
 
Our starting point is an analysis of a perturbation of the so-called push-sum protocol of~\cite{dobra-kempe} 
for computing averages  in  directed networks. We next describe this perturbed 
push-sum protocol. Every node $i$ maintains {\em scalar }
variables $x_i(t), y_i(t), z_i(t), w_i(t)$, where $y_i(0)=1$ for all $i$. 
Every node $i$ updates these variables according to the following rule: for $t\ge0$,
\begin{eqnarray}\label{eq:newx}
 w_i(t+1)  &=& \sum_{j \in N^{\rm in}_i(t) } \frac{x_j(t)}{d_j(t)},\cr
 \hbox{}\cr
y_i(t+1) & = & \sum_{j \in N^{\rm in} _i(t)} \frac{y_j(t)}{d_j(t)}, \cr
\hbox{}\cr
z_i(t+1) & = & \frac{w_i(t+1)}{y_i(t+1)},  \cr
\hbox{}\cr
x_i(t+1)&=&w_i(t+1) + \e_i(t+1),
\end{eqnarray} 
where $\e_i(t)$ is some (perhaps adversarially chosen) perturbation at time $t$. 
Without the perturbation term $\e_i(t)$, the method in Eq.~(\ref{eq:newx}) is called {\em push-sum}.  
For the perturbed push-sum method above in Eq.~(\ref{eq:newx}), we have that the following is true.

\begin{lemma}[\cite{AA2013}]\label{lemma:newx} 
Consider the sequences $\{z_i(t)\}$, $i=1,\ldots,n,$ generated by the method in 
Eq.~\eqref{eq:newx}. Assuming that the graph sequence $\{G(t)\}$ is
$B$-strongly-connected, we have that
for all $t\ge 1$,
\begin{eqnarray*} 
\left|z_i(t+1) -\frac{\1' x(t)}{n}\right| 
& \le &
\frac{8}{\delta}\,\left(\lambda^t\|x(0)\|_1 \right.
\cr
&& \left.
+ \sum_{s=1}^{t}\lambda^{t-s}\|\e(s)\|_1 \right),
\end{eqnarray*} 
where $\e(s)$ is a vector in $\R^n$ which stacks up the scalar variables $\epsilon_i(s)$, $i=1, \ldots, n$, and $\delta, \lambda$ satisfy the same inequalities as in Theorem \ref{thm2}.
\end{lemma}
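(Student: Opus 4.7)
The idea is to recast \eqref{eq:newx} as a linear recursion driven by a column-stochastic matrix and then invoke the standard ergodicity estimate for backward products of such matrices. Define $A(t)$ by \eqref{eq:defA}; since every column of $A(t)$ sums to $1$, the scalar update \eqref{eq:newx} reads in vector form
\[
w(t+1)=A(t)x(t),\quad y(t+1)=A(t)y(t),\quad x(t+1)=w(t+1)+\e(t+1),
\]
with $y(0)=\1$. Writing $\Phi(t,s):=A(t)A(t-1)\cdots A(s)$ for $s\le t$ and $\Phi(t,t+1):=I$, unrolling gives
\[
w(t+1)=\Phi(t,0)\,x(0)+\sum_{s=1}^{t}\Phi(t,s)\,\e(s),\qquad y(t+1)=\Phi(t,0)\,\1.
\]
Column-stochasticity $\1'A(t)=\1'$ also yields the clean identity $\1'x(t)=\1'x(0)+\sum_{s=1}^{t}\1'\e(s)$, so the ``ideal'' average $\1'x(t)/n$ has an explicit closed form in terms of the input and the perturbations alone.

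\textbf{Ergodicity input.} The analytic engine I will invoke is the classical estimate on backward products of column-stochastic matrices under $B$-strong connectivity: there exists a sequence of stochastic vectors $\phi(t)$ and a constant $C>0$ such that
\[
\bigl|[\Phi(t,s)]_{ij}-\phi_i(t)\bigr|\le C\lambda^{t-s},\qquad \min_i\phi_i(t)\ge\delta,
\]
for all $i,j$ and $0\le s\le t$, where $\delta>0$ and $\lambda\in(0,1)$ satisfy exactly the bounds stated in Theorem~\ref{thm2}. The universal constant $C$ will be absorbed into the final prefactor $8$.

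\textbf{Combining.} Substituting the closed form for $\1'x(t)/n$ into the numerator of $z_i(t+1)-\tfrac{\1'x(t)}{n}$, I expand
\begin{align*}
z_i(t+1)-\tfrac{\1'x(t)}{n}
&=\tfrac{1}{y_i(t+1)}\Bigl(w_i(t+1)-y_i(t+1)\tfrac{\1'x(t)}{n}\Bigr)\\
&=\tfrac{1}{y_i(t+1)}\sum_j\Bigl([\Phi(t,0)]_{ij}-\tfrac{y_i(t+1)}{n}\Bigr)x_j(0)\\
&\quad+\tfrac{1}{y_i(t+1)}\sum_{s=1}^{t}\sum_j\Bigl([\Phi(t,s)]_{ij}-\tfrac{y_i(t+1)}{n}\Bigr)\e_j(s).
\end{align*}
Because $y_i(t+1)/n=\tfrac1n\sum_j[\Phi(t,0)]_{ij}$ differs from $\phi_i(t)$ by at most $C\lambda^{t}$, and each $[\Phi(t,s)]_{ij}$ differs from $\phi_i(t)$ by at most $C\lambda^{t-s}$, the triangle inequality bounds the two numerators above by $2C\lambda^{t}\|x(0)\|_1$ and $2C\sum_{s=1}^{t}\lambda^{t-s}\|\e(s)\|_1$, respectively. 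Finally, $y_i(t+1)\ge\delta$ (which follows from the ergodicity estimate specialized to $s=0$ and the lower bound on $\phi_i(t)$) permits the division, and collecting constants yields the $8/\delta$ prefactor asserted in the lemma.

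\textbf{Main obstacle.} The delicate step is the ergodicity estimate with the \emph{explicit} constants $\delta$ and $\lambda$, in particular the bound $\delta\ge n^{-nB}$. This requires a scrambling-coefficient argument applied to the $B$-block products $\Phi((k+1)B-1,kB)$, showing that after at most $nB$ such blocks the composed matrix has some strictly positive column whose entries are lower bounded by $n^{-nB}$. Once that estimate is in hand, the matrix unrolling and triangle-inequality bookkeeping above are routine.
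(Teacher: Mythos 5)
Your reconstruction follows essentially the same route as the proof this lemma is actually given in: the paper does not reprove it but cites Lemma~1 of~\cite{AA2013}, whose argument is exactly your decomposition of $w_i(t+1)$ and $\1'x(t)$ via backward products $\Phi(t,s)=A(t)\cdots A(s)$ of column-stochastic matrices, the geometric ergodicity estimate $|[\Phi(t,s)]_{ij}-\phi_i(t)|\le C\lambda^{t-s}$, and division by $y_i(t+1)$. So the skeleton is right.

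Two points in your write-up are loose, one of them a genuine logical slip. First, you justify $y_i(t+1)\ge\delta$ by "the ergodicity estimate specialized to $s=0$ and the lower bound on $\phi_i(t)$"; that only gives $y_i(t+1)=\sum_j[\Phi(t,0)]_{ij}\ge n(\phi_i(t)-C\lambda^{t})$, which for small $t$ (when $C\lambda^{t}$ is not small) yields nothing useful and can even be negative. The correct route, and the one used both in~\cite{AA2013} and later in this paper's proof of Theorem~\ref{thm0}, is to take $\delta$ \emph{by definition} as $\inf_{t}\min_i[A(t)A(t-1)\cdots A(0)\1]_i$ and to lower-bound it by $n^{-nB}$ directly from the structure of the products (positive diagonals plus $B$-strong connectivity), independently of the ergodicity estimate. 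Second, the constant $8$ in the lemma is fixed, so you cannot simply "absorb a universal constant $C$ into the prefactor": the bookkeeping $|[\Phi(t,s)]_{ij}-y_i(t+1)/n|\le 2C\lambda^{t-s}$ delivers the stated $8/\delta$ only if the ergodicity estimate is proved with $C\le 4$, which is part of the explicit-constant work you deferred to the "main obstacle." With those two repairs the argument is the standard one.
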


We refer the reader to ~\cite{AA2013} for a proof where this statement is Lemma 1. \aor{Informally, 
the push-sum protocol} 
\an{ensures that all $z_i(t)$ track the running averages $\frac{\1' x(t)}{n}$ with a geometric rate $\lambda$, 
while the perturbations $\epsilon_i(t)$ push 
the node values apart. The perturbations can be viewed as an external force that influences the node values and causes additional disagreement.
Lemma~\ref{lemma:newx} provides a bound on the size of the disagreements among the agents in terms of the network caused imbalances and the imbalances due to the external force.} 

\begin{corollary} \label{cor:vector} 
Consider the update of Eq.~\eqref{eq:newx} with the scalar variables $x_i(t), w_i(t), z_i(t), \e_i(t)$ 
replaced by the vector variables
$\bx_i(t), \bw_i(t), \bz_i(t), {\bf e}_i(t)$ for each $i=1, \ldots, n$. 
Assuming that the graph sequence $\{G(t)\}$ is
$B$-strongly-connected,
for all $i = 1, \ldots, n, t\ge 1$ we have 
\begin{align*}
\left\|\bz_i(t+1) -\frac{\sum_{j=1}^n\bx_j(t)}{n}\right\|
&\le
\frac{8}{\delta}\,\left(\lambda^t\sum_{\aor{j}=1}^n \|\bx_\aor{j}(0)\|_1 \right.
\cr
& \left.
+ \sum_{s=1}^{t}\lambda^{t-s}\sum_{\aor{j}=1}^n\|\bold{e}_\aor{j}(s)\|_1 \right),
\end{align*}  
where $\delta, \lambda$ satisfy the same inequalities as in Theorem~\ref{thm2}.  
\end{corollary}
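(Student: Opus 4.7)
The plan is to derive the vector-valued bound by decoupling the update along each coordinate axis and then reassembling the resulting inequalities. The key observation is that the recursion in Eq.~\eqref{eq:newx} is linear in the $\bx$- and $\bw$-variables and involves only scalar weights $1/d_j(t)$; in particular, the $y_i(t)$ dynamics do not depend on $\bx$ at all, and the ratio step $\bz_i(t+1) = \bw_i(t+1)/y_i(t+1)$ acts componentwise. Consequently, if I fix any coordinate $k \in \{1,\dots,d\}$ and let $x^{(k)}_i(t) = [\bx_i(t)]_k$, $w^{(k)}_i(t) = [\bw_i(t)]_k$, $z^{(k)}_i(t) = [\bz_i(t)]_k$, and $\epsilon^{(k)}_i(t) = [{\bf e}_i(t)]_k$, then the scalars $x^{(k)}_i, w^{(k)}_i, z^{(k)}_i, \epsilon^{(k)}_i$ satisfy exactly the scalar perturbed push-sum recursion of Eq.~\eqref{eq:newx}, driven by the same $y_i(t)$ and the same graph sequence.

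The first step is therefore to invoke Lemma~\ref{lemma:newx} once for each coordinate $k$, yielding
\[
\left|z^{(k)}_i(t+1) - \frac{\1' x^{(k)}(t)}{n}\right|
\le \frac{8}{\delta}\left(\lambda^t \|x^{(k)}(0)\|_1 + \sum_{s=1}^{t}\lambda^{t-s}\|\epsilon^{(k)}(s)\|_1\right),
\]
where $x^{(k)}(t), \epsilon^{(k)}(s) \in \R^n$ stack the $k$-th coordinates across nodes. The second step is to sum these inequalities over $k=1,\dots,d$ and use the identity
\[
\sum_{k=1}^d \|x^{(k)}(t)\|_1 = \sum_{k=1}^d \sum_{j=1}^n |[\bx_j(t)]_k| = \sum_{j=1}^n \|\bx_j(t)\|_1,
\]
and similarly for $\epsilon^{(k)}(s)$. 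Summing the left-hand side gives exactly $\|\bz_i(t+1) - n^{-1}\sum_{j=1}^n\bx_j(t)\|_1$.

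Finally, to produce the Euclidean norm stated in the corollary, I apply the standard inequality $\|v\| \le \|v\|_1$ for $v\in\R^d$ on the left-hand side. Chaining these three steps produces exactly the claimed bound, with the same $\delta$ and $\lambda$ inherited from Lemma~\ref{lemma:newx} since neither depends on the dimension $d$. There is essentially no obstacle here beyond keeping the indexing clean: the whole argument rests on the fact that the coordinates of $\bx_i(t)$ are decoupled in the dynamics, so the scalar lemma lifts to the vector setting with only a benign loss hidden in the $\|v\|\le\|v\|_1$ step.
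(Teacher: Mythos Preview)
Your proof is correct and follows exactly the approach the paper takes: apply Lemma~\ref{lemma:newx} coordinatewise, sum over the $d$ coordinates so that the right-hand side becomes $\sum_j\|\bx_j(0)\|_1$ and $\sum_j\|{\bf e}_j(s)\|_1$, and then bound the Euclidean norm on the left by the $1$-norm. The paper states this in a single sentence, and your write-up simply supplies the bookkeeping.
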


Corollary~\ref{cor:vector} follows \aor{immediately} by applying Lemma~\ref{lemma:newx} to each coordinate 
of $\R^d$ and by using the fact that the Euclidean norm of any vector is at most as large as the $1$-norm. 
A more specific setting when the perturbations ${\bf e}_i(t)$ decay with $t$ is considered in the following corollary.  

\begin{corollary} \label{cor:error} 
Under the assumptions of Corollary~\ref{cor:vector} and assuming that 
the perturbation vectors $\bold{e}_i(t)$ are  vectors satisfying
for some scalar $D>0$,
\[ \mathbb{E} \left[ \|\bold{e}_i(t)\|_1 \right]\leq \frac{D}{t} \ 
\qquad\hbox{for all $i=1,\ldots,n$ and all $t\ge1$,}\]
we \aor{then} have that for all $i=1,\ldots,n$ and all $\tau\ge1,$ 

\begin{align*} 
&\mathbb{E} \left[
\sum_{t=1}^{\tau} \left\|\bz_i(t+1) -\frac{\sum_{j=1}^n\bx_j(t)}{n}\right\| \right] \cr
&\qquad\le \frac{8}{\delta}\,\frac{\lambda}{1-\lambda} \sum_{j=1}^n\|\bx_j(0)\|_1 
+\frac{8}{\delta}\,\frac{Dn}{1-\lambda}(1+\ln \tau).
\end{align*} 
The parameters $\delta>0$ and $\lambda\in (0,1)$ satisfy the same inequalities as in Theorem~\ref{thm2}.
\end{corollary}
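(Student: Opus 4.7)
The plan is to derive Corollary~\ref{cor:error} as a direct consequence of Corollary~\ref{cor:vector} by taking expectations, summing the bound from $t=1$ to $\tau$, and using two standard computations: a geometric series bound for the initial-condition term and a swap-of-summations trick followed by the harmonic-series estimate for the perturbation term.

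First I would start from the inequality asserted by Corollary~\ref{cor:vector}, take expectations on both sides, and use the hypothesis $\mathbb{E}[\|\mathbf{e}_i(t)\|_1]\le D/t$ to obtain, for every $t \ge 1$,
\begin{align*}
\mathbb{E}\!\left[\left\|\bz_i(t+1)-\frac{\sum_{j=1}^n \bx_j(t)}{n}\right\|\right]
\le \frac{8}{\delta}\!\left(\lambda^t\sum_{j=1}^n\|\bx_j(0)\|_1 + \sum_{s=1}^{t}\lambda^{t-s}\frac{Dn}{s}\right).
\end{align*}

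Next I would sum this inequality from $t=1$ to $t=\tau$. The initial-condition contribution is controlled by $\sum_{t=1}^{\tau}\lambda^t \le \lambda/(1-\lambda)$, which immediately produces the first term on the right-hand side of the corollary. For the perturbation contribution, I would interchange the order of summation:
\begin{align*}
\sum_{t=1}^{\tau}\sum_{s=1}^{t}\lambda^{t-s}\frac{Dn}{s}
= \sum_{s=1}^{\tau}\frac{Dn}{s}\sum_{t=s}^{\tau}\lambda^{t-s}
\le \sum_{s=1}^{\tau}\frac{Dn}{s}\cdot\frac{1}{1-\lambda},
\end{align*}
and then apply the harmonic-sum bound $\sum_{s=1}^{\tau} 1/s \le 1+\ln\tau$ to obtain the second term.

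Combining the two parts and pulling out the common factor $8/\delta$ yields exactly the stated bound. No step is particularly difficult here; the only routine care required is the swap of summations (justified by nonnegativity of all summands, i.e., Tonelli) and correctly absorbing the geometric factor $1/(1-\lambda)$ together with the harmonic estimate. In this sense the corollary is a purely calculational postprocessing of Corollary~\ref{cor:vector}, and there is no substantive obstacle beyond careful bookkeeping of constants.
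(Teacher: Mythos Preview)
Your proposal is correct and matches the paper's own proof essentially step for step: start from Corollary~\ref{cor:vector}, take expectations using $\mathbb{E}[\|\mathbf{e}_j(s)\|_1]\le D/s$, sum over $t$, handle the initial-condition term by the geometric series $\sum_{t\ge1}\lambda^t\le \lambda/(1-\lambda)$, and handle the perturbation term by swapping the order of summation to pull out $1/(1-\lambda)$ before applying $\sum_{s=1}^\tau 1/s\le 1+\ln\tau$. The paper's write-up is more terse (it sums first and then takes expectation, which is equivalent by linearity), but the argument is identical.
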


\begin{proof} 
We use Corollary~\ref{cor:vector}. 
The first term in the estimate follows immediately. The second term requires some attention:  
\begin{eqnarray*} 
\mathbb{E} \left[ \sum_{t=1}^{\tau} \sum_{s=1}^{t}\lambda^{t-s} 
\sum_{j=1}^n\|\bold{e}_j(s)\|_1 \right]
& \le &  Dn\sum_{t=1}^{\tau} \sum_{s=1}^{t} \frac{\lambda^{t-s}}{s}\\ 
%& = & Dn\sum_{s=1}^{\tau} \left(\sum_{k=0}^{\tau-s}\lambda^k \right) \frac{1}{s} \\ 
& \le &  \frac{Dn}{1-\lambda}\sum_{s=1}^{\tau} \frac{1}{s} 
\end{eqnarray*} and the result follows from the usual bound on the sum of harmonic series, 
$\sum_{s=1}^{\tau} \frac{1}{s} \le 1+\ln \tau$. 
\end{proof} 

%%%%%%%%%%%%%%%%%%%%%%%%%%%%%%%%%%%%%%%%%%%%%%
%\section{Results for Subgradient-Push Method}\label{sec:optim}
%%%%%%%%%%%%%%%%%%%%%%%%%%%%%%%%%%%%%%%%%%%%%%
In the proof of Theorem~\ref{thm2}, we also use 
the following result, which is a generalization of Lemma~8 in~\cite{AA2013}. Before stating this lemma, we introduce some notation. 
%We will adopt the shorthand $\widehat \bg_j(k)$ for $\widehat \bg_j(\bz_j(k))$ and similarly $\bg_j(k)$ for $\bg_j(\bz_j(k))$. Finally, 
We define  ${\cal F}_t$ to be all the information generated by
the stochastic gradient-push method by time $t$, i.e., all the $\bx_i(k), \bz_i(k), \bw_i(k), y_i(k),  \bg_i(k)$ 
and so forth for $k=1, \ldots, t$.
We then have the following lemma. 

\begin{lemma}\label{lemma:key}
\an{Assume that there is a scalar $D>0$ such that 
$\sup_{t}\|\bz_i(t)\|\le D$ for all $i$ with probability 1.
Then, we have  $\sup_t\|\bar \bx(t)\|\le D$ with probability 1. Furthermore,}
\aor{if Assumption~\ref{assume:main1}(b) holds,}
then \an{we have with probability 1,} for all $\bv\in\mathbb{R}^d$ and $t\ge0$, 
\begin{align*}
\mathbb{E} \left[ \right. \|\bar\bx(t+1) & - \bv \|^2 ~|~ {\cal F}_t \left. \right] 
\le   \|\bar\bx(t) - \bv \|^2 \cr & - \frac{2\alpha(t+1)}{n} \left(F(\bar\bx(t)) - F(\bv) \right)\cr
& 
- \frac{\alpha(t+1)}{n} \sum_{j=1}^n \mu_j \|\bz_j(t+1) - \bv\|^2 \cr
& + \frac{4\alpha(t+1)}{n} \sum_{j=1}^n L_j \|\bz_j(t+1)-\bar\bx(t)\|  \cr &
 +\frac{\alpha^2(t+1)}{n}\sum_{j=1}^n \an{(L_j + c_j)^2},
\end{align*}  
where \an{$L_j=\max_{\|u\|\le D}\nabla f_j(u)$ and constants $c_j$ are  from~\eqref{eq:noise}.}
%from \aor{Theorem~\ref{thm2}}.
\end{lemma}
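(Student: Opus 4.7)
\textbf{Proof Plan for Lemma~\ref{lemma:key}.}

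The plan is to first establish the boundedness of $\bar\bx(t)$ as a corollary of the boundedness of $\bz_i(t)$, and then derive the recursive inequality by expanding $\|\bar\bx(t+1)-\bv\|^2$ and carefully handling the inner-product term using the strong-convexity hypothesis.

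For the boundedness, I would exploit the column-stochasticity of the weight matrix $A(t)$ of Eq.~\eqref{eq:defA}. Specifically, since $\bw_i(t+1)=\sum_j A_{ij}(t)\bx_j(t)$ and $A(t)$ is column-stochastic, summing over $i$ gives $\sum_i \bw_i(t+1)=\sum_j \bx_j(t)$, and analogously $\sum_i y_i(t+1)=\sum_j y_j(t)=n$ (by induction, using $y_j(0)=1$). Combining this with $\bw_i(t+1)=y_i(t+1)\bz_i(t+1)$, I obtain
\[ \bar\bx(t)=\frac{1}{n}\sum_{i=1}^n y_i(t+1)\bz_i(t+1), \]
which writes $\bar\bx(t)$ as a convex combination of the $\bz_i(t+1)$, yielding $\|\bar\bx(t)\|\le \max_i\|\bz_i(t+1)\|\le D$.

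For the main inequality, I would first sum the last update in~\eqref{eq:minmet} over $i$ and divide by $n$, using again that $\sum_i \bw_i(t+1)=\sum_j\bx_j(t)$, to get the clean one-step recursion
\[ \bar\bx(t+1)=\bar\bx(t)-\frac{\alpha(t+1)}{n}\sum_{i=1}^n \bg_i(t+1). \]
Expanding the square $\|\bar\bx(t+1)-\bv\|^2$, taking conditional expectation given $\mathcal{F}_t$, and using that $\bz_i(t+1)$ is $\mathcal{F}_t$-measurable while $\mathbb{E}[\mathbf{N}_i(\bz_i(t+1))\mid\mathcal{F}_t]=0$, the cross term reduces to $-\tfrac{2\alpha(t+1)}{n}\sum_i \nabla f_i(\bz_i(t+1))'(\bar\bx(t)-\bv)$, while the quadratic term is bounded via Cauchy--Schwarz by $\tfrac{\alpha^2(t+1)}{n}\sum_i\|\bg_i(t+1)\|^2$, and then by $\tfrac{\alpha^2(t+1)}{n}\sum_i(L_i+c_i)^2$ using $\|\nabla f_i(\bz_i(t+1))\|\le L_i$ (since $\|\bz_i(t+1)\|\le D$) and $\|\mathbf{N}_i\|\le c_i$ almost surely.

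The core step is to control the cross term. I would split each inner product as
\[ \nabla f_i(\bz_i(t+1))'(\bar\bx(t)-\bv)=\nabla f_i(\bz_i(t+1))'(\bar\bx(t)-\bz_i(t+1))+\nabla f_i(\bz_i(t+1))'(\bz_i(t+1)-\bv). \]
Cauchy--Schwarz and $\|\nabla f_i(\bz_i(t+1))\|\le L_i$ turn the first piece into a term of order $L_i\|\bz_i(t+1)-\bar\bx(t)\|$. For the second piece, I would invoke $\mu_i$-strong convexity of $f_i$ to get $\nabla f_i(\bz_i(t+1))'(\bz_i(t+1)-\bv)\ge f_i(\bz_i(t+1))-f_i(\bv)+\tfrac{\mu_i}{2}\|\bz_i(t+1)-\bv\|^2$, and then bridge $\sum_i f_i(\bz_i(t+1))$ to $F(\bar\bx(t))$ using the Lipschitz-type bound $|f_i(\bz_i(t+1))-f_i(\bar\bx(t))|\le L_i\|\bz_i(t+1)-\bar\bx(t)\|$, which follows from the subgradient bound on the ball of radius $D$ (both $\bz_i(t+1)$ and $\bar\bx(t)$ lie in this ball). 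Collecting the two instances of $L_i\|\bz_i(t+1)-\bar\bx(t)\|$ that arise in this decomposition produces the coefficient $4$ in front of the consensus-error term, and assembling everything yields precisely the stated inequality.

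The main obstacle is simply the bookkeeping in the decomposition of the cross term: ensuring that the Lipschitz-type error produced when converting $\sum_i f_i(\bz_i(t+1))$ into $F(\bar\bx(t))$ combines correctly with the error from replacing $\bar\bx(t)$ by $\bz_i(t+1)$ in the inner product, so that the strong-convexity term $-\tfrac{\alpha(t+1)}{n}\sum_j\mu_j\|\bz_j(t+1)-\bv\|^2$ is preserved intact while all remaining errors are absorbed into a single $\tfrac{4\alpha(t+1)}{n}\sum_j L_j\|\bz_j(t+1)-\bar\bx(t)\|$ term.
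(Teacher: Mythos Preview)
Your proposal is correct and follows essentially the same route as the paper's proof: the boundedness of $\bar\bx(t)$ via the convex-combination identity $\bar\bx(t)=\frac{1}{n}\sum_i y_i(t+1)\bz_i(t+1)$, the averaged recursion $\bar\bx(t+1)=\bar\bx(t)-\tfrac{\alpha(t+1)}{n}\sum_i\bg_i(t+1)$, the expansion of the square with the noise vanishing in the cross term, the split of $\nabla f_j(\bz_j(t+1))'(\bar\bx(t)-\bv)$ at $\bz_j(t+1)$, and the use of strong convexity on the second piece are all exactly as in the paper. The only cosmetic difference is in the bridging step: the paper lower-bounds $f_j(\bz_j(t+1))-f_j(\bar\bx(t))$ via convexity at $\bar\bx(t)$ and then bounds the subgradient $\nabla f_j(\bar\bx(t))$ by $L_j$ (using the just-proved $\|\bar\bx(t)\|\le D$), whereas you invoke the equivalent $L_j$-Lipschitz bound on the ball directly; both yield the same $-L_j\|\bz_j(t+1)-\bar\bx(t)\|$ term and hence the same final coefficient~$4$.
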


\begin{proof} 
\an{Note that the matrix $A(t)$ defined in the statement of Theorem~1 (see Eq.~\eqref{eq:defA}) 
is column stochastic, so that $\1'u=\1'A(t)u$ for any vector $u \in \R^n$.
We next show that when $\{\bar\bz_j(t)\}$ are bounded for all $j$ with probability 1, so are the averages $\bar x(t)$. To see this,
 we note that by the definition of $\bw_i(t+1)$  and the column-stochasticity of $A(t)$, we have
 $\sum_{i=1}^n \bw_i(t+1)=\sum_{j=1}^n x_j(t),$ implying that
 \begin{align}\label{eq:oneoo}
 \bar \bx(t)= \frac{1}{n}\sum_{i=1}^n \bw_i(t+1)  = 
  \frac{1}{n}\sum_{i=1}^n y_i(t+1) \bz_i(t+1) ,\end{align}
 where the last equality follows from the definition of $\bz_i(t+1)$. Since the matrices $A(t)$ are column stochastic, the sums of $y_i(t)$ are preserved at all times, i.e., $\sum_{i=1}^n y_i(t)=n$ for all $t$. Furthermore, $y_i(t) >0$ for all $i$ and $t$. Thus,
 relation~\eqref{eq:oneoo} shows that each vector $\bar \bx(t)$ is a convex combination of $\bz_i(t+1)$, $i=1,\ldots,n,$
 implying that for all $t\ge0$
 \begin{align}\label{eq:xbound}
 \|\bar \bx(t)\|\le \max_{i}\|\bz_i(t+1)\|.\end{align}
 Thus, with probability 1, we have $\|\bar x(t)\|\le D.$ }

\an{
We next show the relation stated in the lemma. Due to the column-stochasticity of the matrices $A(t)$,
 for the stochastic gradient-push update of Eq.~(\ref{eq:minmet}) we have}
\begin{equation} \label{eq:avdone} 
\bar \bx(t+1) = \bar \bx(t) - \frac{\alpha(t+1)}{n} \sum_{j=1}^n\bg_j(t+1). 
\end{equation}

Now, let $\bv\in\mathbb{R}^d$ be an arbitrary vector. 
From relation~\eqref{eq:avdone} we can see that for all $t\ge0$,
\begin{align*}
&\|\bar\bx(t+1)  - \bv\|^2 \le \|\bar\bx(t) - \bv \|^2 \cr 
&\qquad - \frac{2\alpha(t+1)}{n} \sum_{j=1}^n \bg_j(t+1)' (\bar\bx(t) - \bv)  \cr 
&\qquad   +\frac{\alpha^2(t+1)}{n^2} \|\sum_{j=1}^n \bg_j(t+1)\|^2.
\end{align*} 
Taking expectations of both sides with respect to ${\cal F}_t$, 
and using $\bg_j(t+1) =\nabla f_j(\bz_j(t+1)) + {\bf N}_j(\bz_j(t+1))$ (see Eq.~\eqref{eq:noisy-grad}) and
the relation 
\[\mathbb{E}[{\bf N}_j({\bz_j(t+1))}\mid {\cal F}_t] 
%= \mathbb{E}[{\bf N}_j({\bz_j(t+1))}\mid z_j(t+1)]
=0,\]
we obtain 
\begin{align*}
&\mathbb{E} \left[ \right.
\|\bar\bx(t+1)   - \bv\|^2 \mid {\cal F}_t \left. \right] 
\le  \|\bar\bx(t) - \bv \|^2  \cr 
& \qquad - \frac{2\alpha(t+1)}{n} \sum_{j=1}^n  
\nabla f_j(\bz_j(t+1))' (\bar\bx(t) - \bv)  \cr 
&  \qquad   +\frac{\alpha^2(t+1)}{n^2} 
\aor{\mathbb{E} \left[ \| \sum_{j=1}^n \bg_j(t+1)\|^2 ~|~ {\cal F}_t \right] }.
\end{align*} 
Next, we upper-bound the last term in the preceding relation. By using 
the inequality $ (\sum_{j=1}^n a_j)^2 \leq n \sum_{j=1}^n a_j^2$ 
we obtain \aor{that with probability $1$},
\[\| \sum_{j=1}^n \bg_j(t+1)\|^2\le n \sum_{j=1}^n \|\bg_j(t+1)\|^2\le \aor{n} \sum_{j=1}^n \an{(L_j+c_j)^2},\]
since \an{$\|\bg_j(t+1)\| \leq \max_{\|u\|\le D}\|\nabla f_j(u)\|+c_j$} 
from Eq. (\ref{eq:noise}). % and the definition of $B_j$. 
This implies for all $t\ge1$,
\begin{align}\label{eq:beg1}
&\mathbb{E} \left[ \right.
\|\bar\bx(t+1)   - \bv\|^2 \mid {\cal F}_t \left. \right] 
\le  \|\bar\bx(t) - \bv \|^2  \cr 
& \qquad - \frac{2\alpha(t+1)}{n} \sum_{j=1}^n  
\nabla f_j(\bz_j(t+1))' (\bar\bx(t) - \bv)  \cr 
&  \qquad +\frac{\alpha^2(t+1)}{n} \sum_{j=1}^n \an{(L_j+c_j)^2}.
\end{align} 
Now, consider each of the cross-terms $\nabla f_j(\bz_j(t+1))' (\bar\bx(t) - \bv)$ in~\eqref{eq:beg1},  
for which  we write
\begin{align}\label{eq:beg2}
& \nabla f_j(\bz_j(t+1))' (\bar\bx(t) - \bv) \cr
& =\nabla f_j(\bz_j(t+1))' (\bar\bx(t) - \bz_j(t+1)) \cr 
&  \quad + \nabla f_j(\bz_j(t+1))' (\bz_j(t+1) - \bv).
\end{align} 
%Recall that the vector $\bg_j'(t+1)$ is a subgradient of $f_i$ at $\bz_j(t+1)$. 
%Note that as a consequence of Eq. (\ref{eq:minmet}), we have that \aor{$y_j(t), j = 1, \ldots, n$ are 
%nonnegative real numbers which sum to $n$ for all $t$. Therefore,
%$$\bar \bx(t)= \frac{1}{n}\sum_{j=1}^n \bw_j(t+1)  = \sum_{j=1}^n \frac{y_j(t+1)}{n} \bz_j(t+1) ,$$
%so that 
By using the Cauchy-Schwarz inequality we have with probability 1,
\begin{eqnarray}\label{eq:beg3}
&&\nabla f_j(\bz_j(t+1))' (\bar\bx(t) - \bz_j(t+1))\qquad\cr
%& \ge &-\|\nabla f_j(\bz_j(t+1))\| \|\bar\bx(t) - \bz_j(t+1)\|\cr
%& \ge & -\max_{\|u\|\le D} \|\nabla f_j(u)\|\|\bar\bx(t) - \bz_j(t+1)\|\cr
& \ge & - L_j \|\bar\bx(t) - \bz_j(t+1)\|,
\end{eqnarray}
As for the term $\nabla f_j(\bz_j(t+1))' (\bz_j(t+1) - \bv)$, we
use the fact that the function $f_j$ is $\mu_i$-strongly convex  
to obtain 
\begin{align}\label{eq:a1}
\nabla f_j(\bz_j(t+1))' (\bz_j(t+1) - \bv) 
&\ge  f_j(\bz_j(t+1)) - f_j(\bv) \cr
&\ +\frac{\mu_j}{2}\|\bz_j(t+1) - \bv\|^2.\end{align}
\an{By writing $ f_j(\bz_j(t+1)) - f_j(\bv) = (f_j(\bz_j(t+1)) - f_j(\bar\bx(t))  + (f_j(\bar\bx(t)) - f_j(\bv))$
and by using the convexity of $f_j$, we have
\begin{align}\label{eq:a2}
f_j(\bz_j(t+1)) - f_j(\bv) 
&\ge \nabla f_j(\bar\bx(t))'(\bz_j(t+1) - \bar\bx(t) ) \cr
 & + (f_j(\bar\bx(t)) - f_j(\bv)),
 \end{align}
 where $\nabla f_j(\bar\bx(t))$ is a subgradient of $f_j$ at $\bar\bx(t)$.
 In view of relation~\eqref{eq:xbound}, the sub-gradients $\nabla f_j(\bar x(t))$ are also bounded with probability 1, so we have
 \begin{align}\label{eq:a3}
 \nabla f_j(\bar\bx(t))'(\bz_j(t+1) - \bar\bx(t) )  - L_j\|\bz_j(t+1)-\bar\bx(t)\|.\end{align}
 From relation~\eqref{eq:a1}, \eqref{eq:a2} and \eqref{eq:a3}, we conclude that with probability 1,}
\begin{align} \nonumber
\nabla f_j(\bz_j(t+1))'
&  (\bz_j(t+1)-  \bv) \geq   - L_j\|\bz_j(t+1)-\bar\bx(t)\|   \cr & + f_j(\bar\bx(t)) - f_i(\bv) \\ & + \frac{\mu_j}{2}\|\bz_j(t+1) - \bv\|^2. \label{eq:beg4}
\end{align} 
By substituting the estimates of Eqs.~\eqref{eq:beg3} and~\eqref{eq:beg4} 
back in relation~\eqref{eq:beg2}, and using $F(\bx)= \sum_{j=1}^n f_j(\bx)$
we obtain 
\begin{align*} 
&\sum_{i=1}^n \nabla f_j(\bz_j(t+1))' (\bar\bx(t)  -  \bv)
\ge  F(\bar\bx(t)) - F(\bv) \cr 
&  \quad + \frac{1}{2}\sum_{j=1}^n \mu_j \|\bz_j(t+1) - \bv\|^2\cr
&\quad -  2\sum_{j=1}^n L_j \|\bz_j(t+1)-\bar\bx(t)\|.  
\end{align*} 
Plugging this relation into Eq.~\eqref{eq:beg1}, we obtain the statement of this lemma. 
\end{proof}

With Lemma~\ref{lemma:key} in place,
we are now ready to provide the proof of  Theorem~\ref{thm2}.
Besides Lemma~\ref{lemma:key}, our arguments will also crucially rely on the results 
established earlier for the perturbed push-sum method. 

\begin{proof}[Proof of Theorem \ref{thm2}] 
The function $F=\sum_{i=1}^n f_i$ has a unique minimum which we will 
denote by $\bz^*$. In Lemma~\ref{lemma:key} we let $\bv=\bz^*$ to obtain for all $t\ge0$,
\begin{align}\label{eq:main2}
\mathbb{E} \left[ \right. \|\bar\bx(t+1) & - \bz^* \|^2 ~|~ {\cal F}_t \left. \right]
\le   \|\bar\bx(t) - \bz^* \|^2  \cr &  - \frac{2\alpha(t+1)}{n} \left(F(\bar\bx(t)) - F(\bz^*) \right) 
\cr & - \frac{\alpha(t+1)}{n} \sum_{j=1}^n \mu_j \|\bz_j(t+1) - \bz^*\|^2\cr
& + \frac{4\alpha(t+1)}{n} \sum_{j=1}^n L_j \|\bz_j(t+1)-\bar\bx(t)\| \cr & 
 +\an{\frac{\alpha^2(t+1)}{n} \sum_{j=1}^n (L_j+c_j)^2}.\end{align} 
Next, we estimate the term $F(\bar\bx(t)) - F(\bz^*)$ in the above equation
by  breaking it into two parts. On the one hand,  \[F(\bar\bx(t)) - F(\bz^*)\ge \frac{1}{2}\left(\sum_{j=1}^n\mu_j\right)\|\bar\bx(t) - \bz^*\|^2.\]
On the other hand, since the function $F$ is Lipschitz continuous with constant $L = L_1 + \cdots + L_n$ \aor{over the ball of radius $D$ around the origin to which all $\bz_j(t), \overline{\bx}(t)$ always belong} \an{(by assumption and by Lemma~\ref{lemma:key}),}
 we also have that for any $i=1,\ldots,n,$ 
 \begin{align*}
 F(\bar\bx(t)) - F(\bz^*)
 &=\left(F(\bar\bx(t)) - F(\bz_i(t+1)\right)) \cr & +\left(F(\bz_i(t+1))-F(\bz^*)\right)\cr
 &\ge -L\|\bz_i(t+1)-\bar \bx(t)\| \cr & +F(\bz_i(t+1))-F(\bz^*).\end{align*}
 Therefore, using the preceeding two estimates we obtain for all $i=1,\ldots,n$, 
 \begin{align}\label{eq:Fvalues}
 2\left(F(\bar\bx(t)) - F(\bz^*) \right)
 &\ge \frac{1}{2}\left(\sum_{j=1}^n\mu_j\right)\|\bar\bx(t) - \bz^*\|^2\cr
 &-L\|\bz_i(t+1)-\bar \bx(t)\| \cr & +\left(F(\bz_i(t+1))-F(\bz^*)\right).\end{align} 
Combining relation~\eqref{eq:Fvalues} with Eq.~\eqref{eq:main2}, we obtain \aor{ that for each $i=1, \ldots, n$, 
\an{with probability 1,}
\begin{align*}
\mathbb{E} \left[ \right. \|\bar\bx(t+1) & - \bz^* \|^2 ~|~ {\cal F}_t \left. \right]
\le   \|\bar\bx(t) - \bz^* \|^2  \cr &  - \frac{\alpha(t+1)}{n} \frac{1}{2} \left( \sum_{j=1}^n \mu_j \right) ||\overline{\bx}(t) - \bz^*||^2
\cr & - \frac{\alpha(t+1)}{n} \left( F(\bz_i(t+1)) - F(\bz^*) \right)
\cr & + \frac{\alpha(t+1)}{n} L ||\bz_i(t+1) - \overline{\bx}(t) || 
\cr & - \frac{\alpha(t+1)}{n} \sum_{j=1}^n \mu_j \|\bz_j(t+1) - \bz^*\|^2\cr
& + \frac{4\alpha(t+1)}{n} \sum_{j=1}^n L_j \|\bz_j(t+1)-\bar\bx(t)\| \cr & 
 +\frac{\alpha^2(t+1)}{n}\sum_{j=1}^n \an{(L_j+c_j)^2}.\end{align*} 
Now plugging in the expression for $\alpha(t)$ and using the definition of $p$ to combine the first two
terms},  we see that 
for all $i=1,\ldots,n$ and all $t\ge0$, 
 \begin{align*}
\mathbb{E} \left[ \right. &\|\bar\bx(t+1)  - \bz^* \|^2 ~|~ {\cal F}_t \left. \right] \le  {\left(1-\frac{2}{t+1}\right)}
\|\bar\bx(t) - \bz^* \|^2 \cr
&- \frac{p}{n(t+1)}\left(F(\bz_i(t+1))-F(\bz^*)\right) \cr 
&  + \frac{p L}{n(t+1)} \|\bz_i(t+1) - \bar \bx(t) \| \cr & 
- \frac{p}{n(t+1)} \sum_{j=1}^n \mu_j \|\bz_j(t+1) -\bz^*\|^2 \cr
& + \frac{4p}{n(t+1)} \sum_{j=1}^n L_j \|\bz_j(t+1)-\bar\bx(t)\| 
+\frac{p^2}{(t+1)^2}\frac{q^2}{n},
 %& +\frac{p^2}{(t+1)^2}\frac{\sum_{j=1}^n B_j^2}{n},
 \end{align*} 
where \an{$q^2=\sum_{j=1}^n (L_j +c_j)^2$}.
We multiply the preceding relation by $t(t+1)$, and 
%\an{by using
%$\sum_{j=1}^n L_j \|\bz_j(t+1)-\bar\bx(t)\|\le L \sum_{j=1}^n \|\bz_j(t+1)-\bar\bx(t)\|$, where $L=\sum_{j=1}^n L_j$,
we obtain that for all $i=1,\ldots,n$ and all $t\ge1$, 
\begin{align}\label{eq:main4}
&(t+1)t  \mathbb{E} \left[ \right.  \|\bar\bx(t+1)  - \bz^* \|^2 ~|~ {\cal F}_t \left. \right] \cr
& \le t(t-1)\|\bar\bx(t) - \bz^* \|^2 \cr
&- \frac{pt}{n}\left(F(\bz_i(t+1))-F(\bz^*)\right)  + \frac{pLt}{n} \|\bz_i(t+1) - \bar \bx(t) \| \cr 
& - \frac{pt}{n} \sum_{j=1}^n \mu_j \|\bz_j(t+1) - \bz^*\|^2\cr
& + \frac{4pt}{n} \sum_{j=1}^n L_j\|\bz_j(t+1)-\bar\bx(t)\| +\frac{p^2t}{(t+1)}\frac{q^2}{n}.
%& +\frac{p^2t}{(t+1)}\frac{\sum_{j=1}^n B_j^2}{n}.
\end{align} 
\an{By iterating the expectations in~\eqref{eq:main4} and applying the resulting inequality}, recursively, 
we obtain that all $\tau\ge 2$, %\begin{footnotesize}
\begin{align}\label{eq:main5}
\tau &  (\tau-1) \mathbb{E} \|   \left[ \right. \bar\bx(\tau)  - \bz^* \|^2 \left. \right]
\le   \cr
&- \frac{p}{n}\sum_{t=1}^{\tau-1} t \mathbb{E} \left[ F(\bz_i(t+1))-F(\bz^*) 
+\sum_{j=1}^n \mu_j \|\bz_j(t+1) -\bz^*\|^2 \right] \cr
& +  \frac{pL}{n} \sum_{t=1}^{\tau - 1} t \mathbb{E}\left[ \|\bz_i(t+1) - \bar \bx(t)\| \right] \\ & 
 +\frac{4p}{n} \sum_{t=1}^{\tau-1} t \sum_{j=1}^n L_j \mathbb{E} \left[ \|\bz_j(t+1)-\bar\bx(t)\| \right]
 + \frac{p^2 \aor{q^2}}{n}\sum_{t=1}^{\tau-1}\frac{t}{t+1} \notag.\end{align} %\end{footnotesize}
By viewing the stochastic gradient-push method as an instance of the perturbed push-sum protocol, 
we can apply Corollary~\ref{cor:error} with $\bold{e}_i(t)=\a(t)\bg_i(t)$.
Since \an{$\|\bg_i(t)\|_1 \le \sqrt{d}\|\bg_i(t)\|\le B_i$ for all $i, t$, with $B_i=\sqrt{d}(L_i +c_i)$,} 
we see that \aor{with probability~$1$,}
\begin{align*}
\mathbb{E} \left[ \|\bold{e}_i(t)\|_1\right] \le \frac{pB_i}{\aor{t}} \qquad\hbox{for all $i$ and $t\ge1$. }
%= E \left[ ||\bg_i(t) + {\bf N}_i(t)||_1 \right] &  \leq L_i + E \left[||{\bf N}_i(t)||_1 \right] 
%\cr & \leq L_i + \sqrt{d} \sigma_i 
\end{align*} 
Thus, by Corollary~\ref{cor:error} we obtain for all $i=1,\ldots,n$, 
\begin{align*}%\label{eq:last1}
\mathbb{E} \left[ \right.  \sum_{t=1}^{\tau-1} \left\| \right. \bz_i(t+1)  - 
& \frac{\sum_{j=1}^n\bx_j(t)}{n} \left. \right\| \left. \right]
\le \frac{8}{\delta}\,\frac{\lambda}{1-\lambda} \sum_{j=1}^n\|\bx_j(0)\|_1 \cr
&+\frac{8}{\delta}\,\frac{p n ~\aor{\max_j B_j}}{1-\lambda}(1+\ln (\tau-1)).
%&+\frac{8}{\delta}\,\frac{p n ( L_i + \sqrt{d} \sigma_i )}{1-\lambda}(1+\ln (\tau-1)).
\end{align*}
\an{Upon substituting %\eqref{eq:last1} 
the preceding inequality into relation~\eqref{eq:main5} and dividing both sides
by $\tau(\tau-1)$, after re-arranging the terms, we obtain  for all $\tau\ge2,$} 
\aor{
\begin{align*}
& \frac{p}{ n \tau (\tau-1)}   \sum_{t=1}^{\tau-1} t 
\mathbb{E} \left[ \right. F(\bz_i(t+1))- F(\bz^*) 
+\sum_{j=1}^n \mu_j \|\bz_j(t+1) -\bz^*\|^2 \left. \right]   \cr & 
\leq  
 \frac{pL}{n \tau} \frac{8 }{\delta} \left( \frac{\lambda}{1-\lambda} 
 \sum_{j=1}^n \|\bx_j(0)\|_1  + \frac{pn \max_j B_j}{1-\lambda} (1 + \ln (\tau-1) \right) 
\cr & 
+\frac{4p}{n \tau}  \sum_{j=1}^n L_j \frac{8}{\delta} \left( \frac{\lambda}{1-\lambda} 
\sum_{j=1}^n \|\bx_j(0)\|_1  + \frac{pn \max_j B_j}{1-\lambda} (1 + \ln (\tau-1) \right) 
\cr & + \frac{p^2 q^2 }{\tau n}.\end{align*}  
}
\an{
Combining the first two terms on the right hand side of the preceding relation, using $L=\sum_{j=1}^n L_j$ 
and canceling $p/n$ from both sides, we get}
\begin{align} \label{eq:main7} 
&\frac{1}{ \tau (\tau-1)}   \sum_{t=1}^{\tau-1} t 
\mathbb{E} \left[ \right. F(\bz_i(t+1))- F(\bz^*) 
+\sum_{j=1}^n \mu_j \|\bz_j(t+1) -\bz^*\|^2 \left. \right]   \cr & 
\leq  \frac{5L}{\tau}  \frac{8}{\delta} \left( \frac{\lambda}{1-\lambda} \sum_{j=1}^n ||\bx_j(0)||_1  + \frac{pn \max_j B_j}{1-\lambda} (1 + \ln (\tau-1) \right) 
\cr & + \frac{p q^2 }{\tau }.\end{align} 
Finally, by convexity we have \aor{for each $i=1, \ldots, n$,} \begin{small}  
\[ \frac{\sum_{t=1}^{\tau-1} t \left( F( \bz_i(t+1) ) - F(\bz^*) + \sum_{j=1}^n \mu_j \|\bz_j(t+1) - \bz^*\|^2 \right)}{\tau(\tau-1)\aor{/2}}  \] 
\begin{equation} \geq  
F (\widehat{\bz}_i(\tau)) - F(\bz^*) + \sum_{j=1}^n \mu_j \|\widehat{\bz}_j(\tau) - \bz^*\|^2\label{eq:convexity} \end{equation} \end{small}
\aor{Putting together} Eqs.~(\ref{eq:main7}) and~(\ref{eq:convexity}) concludes the proof. 
 \end{proof}

%%%%%%%%%%%%%%%%%%%%%%%%%%%%%%%%%
\section{Proof of Theorem \ref{thm0}}\label{sec:proof0}
%%%%%%%%%%%%%%%%%%%%%%%%%%%%%%%%%

We begin by briefly sketching the main idea of the proof. The proof proceeds by simply arguing that if
$\max_i \|\bz_i(t)\|$ \an{gets} large, it decreases. 
Since the stochastic subgradient-push protocol (Eq.~\eqref{eq:minmet}) is somewhat
involved, proving this will require some involved arguments relying on the level-set boundedness of strongly convex functions with Lipschitz gradients and some
special properties of element-wise ratios of products of column-stochastic matrices.

Our starting point is a lemma that exploits the structure of strongly convex functions with Lipschitz gradients. 
\begin{lemma} Let $q:\R^d\to\R$ be a $\mu$-strongly convex 
function with $\mu>0$ and have Lipschitz continuous gradients with constant $M > 0$. 
Let $v\in\R^d$ and let $u\in\R^d$ be defined by
\[ u = v - \alpha \left(\nabla q(v) +\phi(v)\right), \]
where \aor{$\alpha\in(0,\frac{\mu}{8M^2}]$} and $\phi:\R^d\to\R^d$ is a mapping such that 
\aor{\[\|\phi(v)\|\le c\qquad\hbox{for all }v\in\R^d.\]}
Then, there exists a compact set ${\cal V}\subset\R^d$ \aor{ (which depends on $c$ and the funtion $q(\cdot)$ but not
on $\alpha$)}  such that 
\[\|u\|\leq\left\{ \begin{array}{ll}
\|v\| & \hbox{for all $v\not\in {\cal V}$}\cr
R & \hbox{for all $v\in {\cal V}$},
\end{array}\right.\]
where \aor{$R=\max_{z\in {\cal V}}\left\{\|z\| + (\mu/(8M^2)) \|\nabla q(z)\|\right\}+( \mu c)/(8M^2)$.}
%\hbox{for all $\alpha \in (0,\hat\alpha]$ and $\nu$ with $\|\nu\|\le c$}.\]
\label{normdec}
\end{lemma}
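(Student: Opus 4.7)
The plan is to expand $\|u\|^2$, use strong convexity together with the Lipschitz gradient condition to produce a strictly negative quadratic term in $\|v-z^*\|$ (where $z^*$ is the unique minimizer of $q$, which exists by strong convexity), and then observe that this negative term dominates all other contributions once $\|v\|$ is sufficiently large. This will identify a compact set $\mathcal{V}$ outside of which contraction holds; on $\mathcal{V}$ we simply use the triangle inequality.

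Concretely, I would start from
\[ \|u\|^2 = \|v\|^2 - 2\alpha\, v^\top(\nabla q(v)+\phi(v)) + \alpha^2\|\nabla q(v)+\phi(v)\|^2. \]
Since $\nabla q(z^*)=0$, strong convexity gives $(v-z^*)^\top \nabla q(v)\ge \mu\|v-z^*\|^2$, and the Lipschitz bound gives $\|\nabla q(v)\|\le M\|v-z^*\|$. Writing $v=(v-z^*)+z^*$ and applying Cauchy--Schwarz to the $z^*$ part yields
\[ v^\top \nabla q(v) \ge \mu\|v-z^*\|^2 - M\|z^*\|\|v-z^*\|. \]
Then $-v^\top\phi(v)\le c\|v\|$ and $\|\nabla q(v)+\phi(v)\|^2\le 2M^2\|v-z^*\|^2 + 2c^2$. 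Plugging these in and using $\alpha\le \mu/(8M^2)$ to absorb $2\alpha^2 M^2\|v-z^*\|^2$ into $\tfrac14\alpha\mu\|v-z^*\|^2$ produces
\[ \|u\|^2-\|v\|^2 \le -\tfrac{7\alpha\mu}{4}\|v-z^*\|^2 + 2\alpha M\|z^*\|\|v-z^*\| + 2\alpha c\|v\| + 2\alpha^2 c^2. \]

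Once $\|v\|\ge 2\|z^*\|$ we have $\|v-z^*\|\ge \tfrac12\|v\|$, so the leading negative term is of order $-\|v\|^2$ while the remaining terms are at most linear in $\|v\|$ plus a constant. Hence there exists $R_0>0$ (depending only on $\mu,M,c,\|z^*\|$, not on $\alpha$, because $\alpha$ factors out of the inequality $\|u\|^2\le\|v\|^2$) such that $\|u\|\le\|v\|$ whenever $\|v\|\ge R_0$. Take $\mathcal{V}$ to be the closed ball of radius $R_0$; it is compact and independent of $\alpha$, and the first branch of the claim holds for $v\notin \mathcal{V}$.

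For $v\in\mathcal{V}$, the triangle inequality gives
\[ \|u\|\le \|v\| + \alpha\|\nabla q(v)\| + \alpha c \le \|v\| + \tfrac{\mu}{8M^2}\|\nabla q(v)\| + \tfrac{\mu c}{8M^2}, \]
using $\alpha\le \mu/(8M^2)$. Taking the maximum over the compact set $\mathcal{V}$ of the continuous map $z\mapsto \|z\| + (\mu/(8M^2))\|\nabla q(z)\|$ recovers exactly the constant $R$ in the statement. The main technical step I expect to require care is book-keeping the constants so that the contraction threshold $R_0$ (and hence $\mathcal{V}$) can be chosen independently of $\alpha$; this is what the slack factor $\alpha\le\mu/(8M^2)$ (rather than the tighter $1/M$) buys us, since it guarantees the negative quadratic term stays at size $\Theta(\alpha\mu)$ regardless of how small $\alpha$ is.
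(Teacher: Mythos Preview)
Your argument is correct and follows the same high-level strategy as the paper: expand $\|u\|^2$, use strong convexity and the Lipschitz gradient bound to show $\|u\|^2\le\|v\|^2$ outside a compact set that does not depend on $\alpha$, and then use the triangle inequality together with $\alpha\le\mu/(8M^2)$ inside that set to obtain the constant $R$.

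The execution differs in two mildly interesting ways. You center the analysis at the minimizer $z^*$ and exploit $\nabla q(z^*)=0$, which lets you work purely with distances $\|v-z^*\|$ and take $\mathcal{V}$ to be a Euclidean ball. The paper instead centers at the origin: it uses the strong-convexity inequality in the form $\nabla q(v)'v\ge q(v)-q(0)+\tfrac{\mu}{2}\|v\|^2$, keeps the function-value term $q(v)-q(0)$ in the estimate, and consequently defines $\mathcal{V}$ as the union of a sublevel set of $q$ with a ball $B(0,4c/\mu)$. Your version is arguably cleaner (no need to carry $q(v)-q(0)$ or $\|\nabla q(0)\|$), while the paper's version makes the dependence of $\mathcal{V}$ on the level-set geometry of $q$ explicit. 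One small remark: when you say ``$\alpha$ factors out of the inequality $\|u\|^2\le\|v\|^2$'', note that the residual $2\alpha^2 c^2$ term does not literally factor; you need the upper bound $\alpha\le\mu/(8M^2)$ to replace it by a constant independent of $\alpha$, which you correctly acknowledge in your last paragraph.
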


\begin{proof} \aor{The strong convexity of the function $q$ implies  
\[ \nabla q(v)' v \geq q(v) - q(0) + \frac{\mu}{2} ||v||^2 \] Consequently,} for the vector $u$ we have \begin{small}
\begin{align}
\|u\|^2 
& =  \|v\|^2 - 2 \alpha \aor{ (\nabla  q(v) + \phi(v))}' v + \alpha^2 \| \nabla q(v) +\phi(v)\|^2 \nonumber \\ 
%& =  \|v\|^2 - 2 \alpha \nabla q'(v) (v - 0) + \alpha^2 \| \nabla q(v) +\nu\|^2 \nonumber \\
& \leq  (1-\alpha\mu) \|v\|^2 - 2 \alpha (q(v) - q(0)) \nonumber \cr
&\quad \aor{- 2 \alpha \phi(v)' v} + \alpha^2 \| \nabla q(v) +\phi(v)\|^2,
\end{align}  \end{small}
For the last term in the preceding relation, 
we write
\begin{align}\label{eq:jedan}
\|\nabla q(v) +\phi(v)\|^2  \le 2\|\nabla q(v)\|^2 +2\|\phi(v)\|^2,\end{align}
where we use the  inequality
\begin{align}\label{eq:simple}
(a+b)^2 \le 2(a^2 + b^2)\qquad\hbox{for any $a,b\in\R$}. 
\end{align}
We can further write
\begin{align}\label{eq:dva}
\|\nabla q(v)\|^2
&\le (\|\nabla q(v) - \nabla q(0)\| + \|\nabla q(0)\|)^2 \cr
&\le 2M^2 \|v\|^2 + 2\|\nabla q(0)\|^2,
\end{align}
where the last inequality is obtained by using Eq.~\eqref{eq:simple} and by exploiting
the Lipchitz property of the gradient of $q$.
Similarly, using the given growth-property of $\|\phi(v)\|$  we obtain
\begin{align}\label{eq:tri}
\aor{\|\phi(v)\|^2 \leq c^2, ~~~\aor{|\phi(v)' v|}  \le \aor{ c \|v\|}. }
\end{align}
By substituting Eqs.~\eqref{eq:dva}--\eqref{eq:tri} in relation~\eqref{eq:jedan},
we find 
\[\|\nabla q(v) +\phi(v)\|^2  \le \aor{4M^2} \|v\|^2 +4\|\nabla q(0)\|^2 + 4c^2.\]
\aor{Therefore,}
\begin{align*} 
\|u\|^2 
& \leq  \left(1-\alpha(\mu-\aor{4 \alpha M^2})\right)\|v\|^2 \cr
& \quad - 2 \alpha (q(v) - q(0)) \aor{+2 \alpha c \|v\|} + 4\alpha^2 ( \|\nabla q(0)\|^2 +c^2),
\end{align*}
which for $\alpha\in \aor{(0,\frac{\mu}{8M^2}]}$ yields 
\an{
\begin{align}\label{eq:lset}
 \|u\|^2 \leq 
 &\|v\|^2 - \alpha \left( \frac{\ \mu}{2} \|v\|^2 - 2 c \|v\| \right) \cr
& - 2 \alpha \left( q(v) - q(0)  + 2 \alpha ( \|\nabla q(0)\|^2 +c^2) \right).\end{align}
} 
Define the set ${\cal V}'$ to be the following level set of $q$:
\[{\cal V}'=\{z \mid q(z)  \leq q(0) + 2 \aor{\frac{\mu}{8M^2}} (\|\nabla q(0)\|^2 +c^2 )\}.\]
Being the level-set of a strongly-convex function, the set ${\cal V}'$ is compact~\cite{BNO}(see Proposition 2.3.1(b), page 93).
Let $B({\bf 0}, \e)$ be the Euclidean ball centered at the origin and with a radius $\e>0$. Define the set ${\cal V}$ as follows:
\[{\cal V} = {\cal V}' \cup B({\bf 0}, 4c/\mu).\]

\an{If $v$ is such that $\|v\| \geq 4c/ \mu$ and $q(v)  \geq q(0) + 2\alpha (\|\nabla q(0)\|^2 +c^2 )$ (i.e., $v\not\in {\cal V}$),
then by relation~\eqref{eq:lset},} we obtain $\|u\|^2\le \|v\|^2.$
On the other hand, 
if $v\in {\cal V}$, then by using the definition of $u$ %, the Lipschitz-gradient property of $q$
and the bound $||\phi(v)|| \leq c$ we can see that
\[\|u\|\le \aor{\|v\|} +\alpha\|\nabla q(v)\|+\alpha c.\]
By using the upper bound on $\alpha$ 
we obtain the stated relation. % for this case.
\end{proof}

We next state an important relation for the images of two vectors under a linear 
transformation with a column-stochastic matrix. \aor{This is a generalization of 
a relation from  \cite{Bertsekas:1997}, Section 7.3.2.}

\begin{lemma}\label{lem:column-stoch}
Suppose $P$ is an $n\times n$ column-stochastic matrix with positive diagonal entries, 
and let $u,v\in\R^n$ with the vector $v$ having all entries positive. 
Consider the vectors $\hat u$  and $\hat v$ given, respectively, by
\begin{eqnarray*} 
\hat u = P u,\qquad
\hat v = P v.
\end{eqnarray*} 
Define the vectors $r$ and $\hat r$ with their $i$'th entries given by
$r_i=u_i/v_i $ and $\hat r_i = \hat u_i/\hat v_i$, respectively.
Then, we have
 \[ \hat r = Qr, \] 
 where $Q$ is a row-stochastic matrix.  \label{contraction}
\end{lemma}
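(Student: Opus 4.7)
My plan is to carry out a direct construction: guess the matrix $Q$ from the algebra and then verify its properties by routine calculation. The only subtlety is ensuring the denominators $\hat v_i$ are strictly positive so that $\hat r$ is well-defined, which is precisely where the hypothesis that $P$ has positive diagonal entries will be used.

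First I would observe that since $v$ has all positive entries and $P$ has nonnegative entries with positive diagonal, we have $\hat v_i = \sum_j P_{ij} v_j \ge P_{ii} v_i > 0$ for every $i$, so both $r$ and $\hat r$ are well-defined. Next I would write out the definition of $\hat r_i$ and insert the identity $u_j = v_j r_j$ inside the sum that defines $\hat u_i$, yielding
\[
\hat r_i \;=\; \frac{\hat u_i}{\hat v_i} \;=\; \frac{\sum_{j=1}^n P_{ij} v_j \, r_j}{\hat v_i} \;=\; \sum_{j=1}^n \frac{P_{ij} v_j}{\hat v_i}\, r_j.
\]
This immediately suggests defining
\[
Q_{ij} \;=\; \frac{P_{ij}\, v_j}{\hat v_i},
\]
so that by construction $\hat r = Qr$.

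It remains to check that $Q$ is row-stochastic. Nonnegativity of the entries is clear because $P_{ij}\ge 0$, $v_j>0$, and $\hat v_i>0$. For the row sums, I would compute
\[
\sum_{j=1}^n Q_{ij} \;=\; \frac{\sum_{j=1}^n P_{ij} v_j}{\hat v_i} \;=\; \frac{\hat v_i}{\hat v_i} \;=\; 1,
\]
which completes the argument. Note that the column-stochasticity of $P$ is not used in the algebraic identity itself; its role is implicit through the setting of the lemma (it is precisely the column-stochastic case where passing from $P$ to the ratio-matrix $Q$ yields a row-stochastic matrix, which is the useful direction).

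I do not anticipate any real obstacle: the proof is essentially a one-line rewriting once the right $Q$ is guessed, and the main conceptual content is recognizing that multiplying row $j$ of the trivial identity $u_j = v_j r_j$ by $P_{ij}$ and summing produces both $\hat u_i$ on one side and a convex-combination-after-normalization of the $r_j$'s on the other.
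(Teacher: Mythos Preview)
Your proof is correct and follows essentially the same route as the paper: both arguments substitute $u_j = v_j r_j$ into $\hat u_i = \sum_j P_{ij} u_j$, divide by $\hat v_i$ (whose positivity is guaranteed by the positive diagonal of $P$ and positivity of $v$), and read off $Q_{ij} = P_{ij} v_j / \hat v_i$, with row-stochasticity following from $\hat v = Pv$. Your additional remark that column-stochasticity of $P$ is not actually used in the algebra is accurate and worth noting.
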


\begin{proof} 
Indeed,  note that 
\[ \hat u_i = \sum_{j=1}^n P_{ij} u_j \qquad\hbox{for all }i. \] 
Since $\hat u_i=\hat v_i \hat u_i$  and $u_j=v_j r_j$, 
the preceding equation can be rewritten as 
\[ \hat v_i \hat r_i = \sum_{j=1}^n P_{ij} v_j r_j.\] 
Since $v$ has all entries positive and $P$ has positive diagonal entries, it follows that 
$\hat v$ also has all entries positive. Therefore, 
\[ \hat r_i = \frac{1}{\hat v_i} \sum_{j=1}^n P_{ij} v_j r_j
=\sum_{j=1}^n \frac{P_{ij} v_j}{\hat v_i} r_j. \] 
Define the matrix $Q$ from this equation, i.e.,
$Q_{ij}=\frac{P_{ij} v_j}{\hat v_i}$ for all $i,j$. The fact that $Q$ is row-stochastic
follows from $\hat v = Pv$.
%\[\sum_j=1}^n Q_{ij}=\sum_{j=1}^n \frac{P_{ij} v_j}{\hat v_i} =\frac{\hat v_i}{\hat v_i}=1.\]
\end{proof}

\smallskip

\aor{Informally speaking, the above lemma reduces the ``push-sum'' iteration to a simple stochastic
``consensus'' update. We note that it could be used to provide considerable simplifications of many of the 
arguments that have been used to show the convergence of push-sum in the past, though this
is beyond the scope of the present paper. With this lemma in place, we now proceed to prove
our second theorem.} \\
\begin{proof}[Proof of Theorem~\ref{thm0}] 
Letting $y(t)$ be the vector with entries $y_i(t)$, we can write
$y(t+1)=A(t)y(t),$ where $A(t)$ is the matrix given in Eq.~\eqref{eq:defA}.
Thus, since $y_i(0)=1$ for all $i$, we have
\[y(t)=A(t)A(t-1)\cdots A(0)\1\qquad\hbox{for all $i$ and $t\ge1$},\]
where $\1$ is the vector with all entries equal to 1.
Under Assumption~\ref{assume:main1}(a), we have shown in~\cite{AA2013} (see there Corollary 2(b))
that for all $i$,
\[\delta=\inf_{t=0,1,\ldots} \left( \min_{1\le i\le n}[A(t)A(t-1)\cdots A(0)\1]_i\right) >0.\]
Therefore, we have 
\begin{align}\label{eq:yb}
y_i(t)\ge\delta\qquad\hbox{for all $i$ and $t$}.
\end{align}
Thus, using the definition of $\bx_i(t+1)$, we can see \an{that} for all $t \geq 1$, 
\begin{eqnarray*} \bx_i(t) & = & \aor{\bw_i(t) - \alpha(t) \bg_i(t)} \\ 
& = & y_i(t) \left( \bz_i(t) - \frac{\alpha(t)}{y_i(t)} \bg_i(t) \right)
\end{eqnarray*}
implying that for all $i$ and $t\ge1$,
\begin{align}\label{eq:imp}
\frac{\bx_i(t)}{y_i(t)}= \bz_i(t) - \frac{\alpha(t)}{y_i(t)}\bg_i(t).
\end{align}

Since the matrix $A(t)A(t-1)\cdots A(0)$ is column stochastic and $y(0)=\1$, we have that $\sum_{i=1}^n y_i(t)=n$. Therefore,
$y_i(t)\le n$, which together with Eq.~\eqref{eq:yb} and $\alpha(t)\to0$ yields
\[\lim_{t\to0}\frac{\alpha(t)}{y_i(t)}=0\qquad\hbox{for all } i.\] 
Therefore, for every $i$, there is a time $\tau_i>1$ such that 
$\alpha(t)/y_i(t)\le \frac{\mu_i}{\aor{8 M_i^2}}$ for all $t\ge \tau_i$.
Hence, for each~$i$, Lemma~\ref{normdec} applies to 
the vector $\bx_i(t)/y_i(t)$ for $t\ge\tau_i$. 
By Lemma~\ref{normdec}, it follows that for each function $f_i$,
there is a compact set ${\cal V}_i$ and a time $\tau_i$ such that for all $t\ge \tau_i$,
\begin{align}\label{eq:bounded-xy}
\left\|\frac{\bx_i(t)}{y_i(t)}\right\|\le \left\{\begin{array}{ll}
\|\bz_i(t)\| & \hbox{if $\bz_i(t)\not\in {\cal V}_i$},\cr
R_i & \hbox{if $\bz_i(t)\in {\cal V}_i$},\end{array}\right.
\end{align}

Let $\tau=\max_i{\tau_i}$.
By using the mathematical induction, we will prove that for all $t \geq \tau$, 
\begin{align}\label{eq:time}
\max_{1\le i\le n}\|\bz_i(t)\|\le \bar R,
\end{align}
where $\bar R=\max\{\max_i R_i,\max_j\|\bz_j(\tau)\|\}$.
Indeed, relation~\eqref{eq:time} is true for $t=\tau$. 
Suppose it is true at some time $t\aor{\geq}\tau$. 
Then, by Eq.~\eqref{eq:bounded-xy}
we have
\begin{align}\label{eq:hypo}
\left\|\frac{\bx_i(t)}{y_i(t)}\right\|\le \max\{R_i,\max_j\|\bz_j(t)\|\}%\le \max\{R_i,\bar R\}=
\le \bar R \ \hbox{for all $i$},\end{align}
where the last inequality follows by the induction hypothesis.  
Next, we use Lemma~\ref{contraction} with $v=y(t)$, $P=A(t)$, and $u$ taken as the vector of the $\ell$'th
coordinates  of the vectors $\bx_j(t)$, $j=1,\ldots,n$, where the coordinate index $\ell$ is arbitrary.
In this way, we obtain that each vector $\bz_i(t+1)$ is a convex combination of the vectors $\bx_i(t)/y_i(t)$,
i.e., %for all $t\ge0$,
\begin{align}\label{eq:imp1}
\bz_i(t+1)=\sum_{j=1}^n Q_{ij}(t)  \frac{\bx_j(t)}{y_j(t)} \ \hbox{for all $i$ and $t\ge0$},
\end{align}
where $Q(t)$ is a row stochastic matrix with entries $Q_{ij}(t)=\frac{A_{ij}(t) y_j(t)}{y_i(t+1)}$.
By the convexity of the Euclidean norm, it follows that for all $i$,
\[\|\bz_i(t+1)\|\le \sum_{j=1}^n Q_{ij}(t)  \left\|\frac{\bx_j(t)}{y_j(t)}\right\|\le 
\max_{1\le j\le n}\left\|\frac{\bx_j(t)}{y_j(t)}\right\|,\]
which together with Eq.~\eqref{eq:hypo} yields $\|\bz_i(t+1)\|\le \bar R$,
thus implying that at time $t+1$ we have
\[\max_{1\le i\le n} \|\bz_i(t+1)\|\le \bar R.\]
Hence, Eq.~\eqref{eq:time} is valid for all $t\ge\tau$.

Note that the constant $\bar R$ is random as it depends on 
the random vectors $z_i(\tau)$, $i=1,\ldots,n$, where the time $\tau$ is deterministic.
\aor{However, we next argue that we may replace $\bar R$ with a deterministic constant
which upper bounds all $\|\bz_i(t+1)\|, i = 1, \ldots, n, t \geq 1$. Indeed,  it would suffice to find a constant which 
upper bounds all $\|z_i(t)\|$ with $i  = 1, \ldots, n$ and $t=1, \ldots, \tau$. }

Using Eqs.~\eqref{eq:imp} and~\eqref{eq:imp1}, we can see that for all $t\ge1$,
\[\max_{1\le i\le n}\|\bz_i(t+1)\|
\le \max_{1\le j\le n}\left(\|\bz_j(t)\|+\frac{\alpha(t)}{y_j(t)}\|\bg_j(t)\|\right).\]
\an{Let $\bz_j^*$ be the minimizer of $f_j$, which exists and is unique due to the strong convexity of 
$f_j$. Then,  by Assumption 2 it follows that}
\begin{eqnarray*} \| \bg_j(t) \| & \leq & \| \nabla f_j(\bz_j(t))\| +   c_j \cr 
& \leq & M_j \|\bz_j(t) - \bz_j^*\| +  c_j \cr
& \leq & M_j \|\bz_j(t)\| + M_j \| \bz_j^*\| + c_j.
\end{eqnarray*} 
\aor{Consequently, 
\[\max_{1\le i\le n}\|\bz_i(t+1)\|\le \gamma_1 \max_{1\le j\le n}\|\bz_j(t)\|+\gamma_2 \max_{1 \leq j \leq n} \| \bz_j^* \| + \gamma_3,\]}
where \aor{$\gamma_1=1+ (\bar \alpha/\delta)\max_j M_j$, $\gamma_2 = (\bar \alpha/\delta) \max_j M_j$,
$\gamma_3=(\bar \alpha/\delta) \max_j c_j$}, and $\bar \alpha=\max_{t}\alpha(t)$.
Thus, using the preceding relation recursively for $t=1,\ldots,\tau-1,$ and the fact that the initial points 
$\bx_i(0)$ are deterministic, we conclude that there exists a uniform deterministic bound on $\|\bz_i(t)\|$
for all $t\ge 1$ and $i$.
\end{proof} 

\aor{\noindent {\bf Remark}: It is possible to generalize our results to more general models of noise where
\[ \|N_i({\bf u})\| \leq \epsilon_i \|{\bf u}\| + c_i \] 
and each $\epsilon_i$ is small compared to $\mu_i$. We omit the 
\an{details as the proofs are essentially identical to the proofs given here for the case of $\e_i=0$}.}

\section{Simulations}\label{sec:nums}
We report some simulations of the subgradient-push method which 
experimentally demonstrate its performance.  
We consider the scalar function $F(\theta) = \sum_{i=1}^n p_i (\theta - u_i)^2$ 
where $u_i$ is a variable that is known only to node $i$. 
This is a canonical problem in distributed estimation, whereby 
the nodes are attempting to measure a parameter $\widehat \theta$. 
Each node $i$ measures $u_i = \widehat \theta + w_i $, where $w_i$ are jointly Gaussian and zero mean. 
Letting $p_i$ be the inverse of the variance
of $w_i$, the maximum likelihood estimate is the minimizer $\theta^*$ of $F(\theta)$ 
($\theta^*$ is unique provided that $p_i>0$ for at least one $i$). 
Each $p_i$ is a uniformly random variable taking values between $0$ and $1$. 
The initial points $x_i(0)$ are generated as independent random variables, each with a standard 
Gaussian distribution. This setup is especially attractive since the optimal solution can be 
computed explicitly (it is a weighted average of the $u_i$) allowing us to see exactly how far from
optimality our protocol is at every stage.

The subgradient-push method is run for 200 iterations with the stepsize
$\a(t)=p/t$ and $p=2n/(\sum_{i=1}^np_i)$.
The graph sequence is constructed over 1000 nodes with a random connectivity pattern.

Figure~1 shows the results obtained for simple random graphs where every node has two out-neighbors, 
one belonging to a fixed cycle and the other one chosen uniformly at random at each step.
The top plot shows how $\ln(|\widetilde{z_i}(t) - \theta^*|)$ decays on average 
(over 25 Monte Carlo simulations) for five randomly selected nodes.
The bottom plot shows a sample of $\ln(|\widetilde{z_i}(t) - \theta^*|)$ for a single Monte Carlo run
and the same selection of 5 nodes.

Figure 2 illustrates the same quantities for the sequence of graphs which alternate between 
two (undirected) star graphs. 
\begin{figure}[h!]
\vskip -7pc
\centering
\includegraphics[scale=0.37]{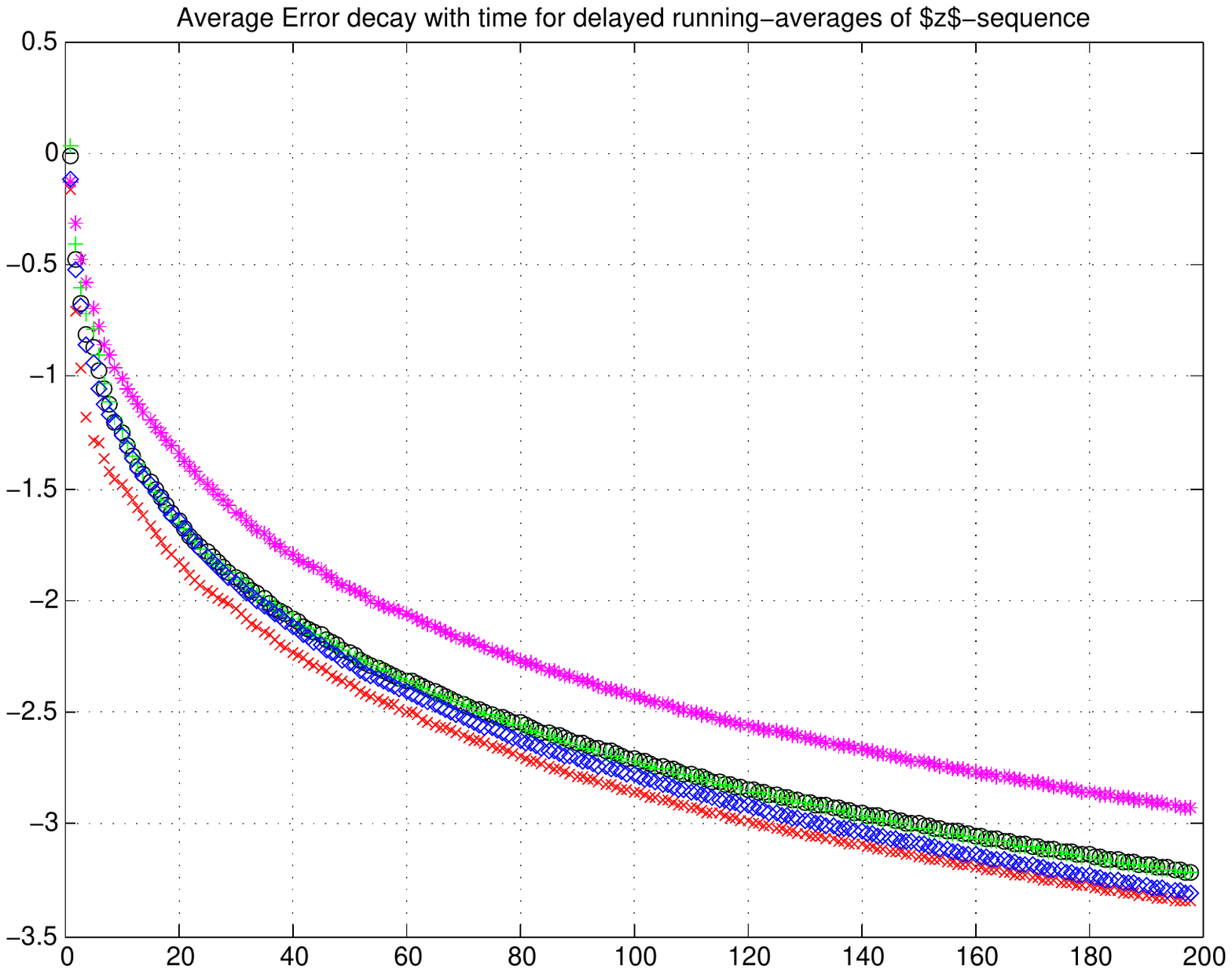}
\vskip -10pc
\includegraphics[scale=0.37]{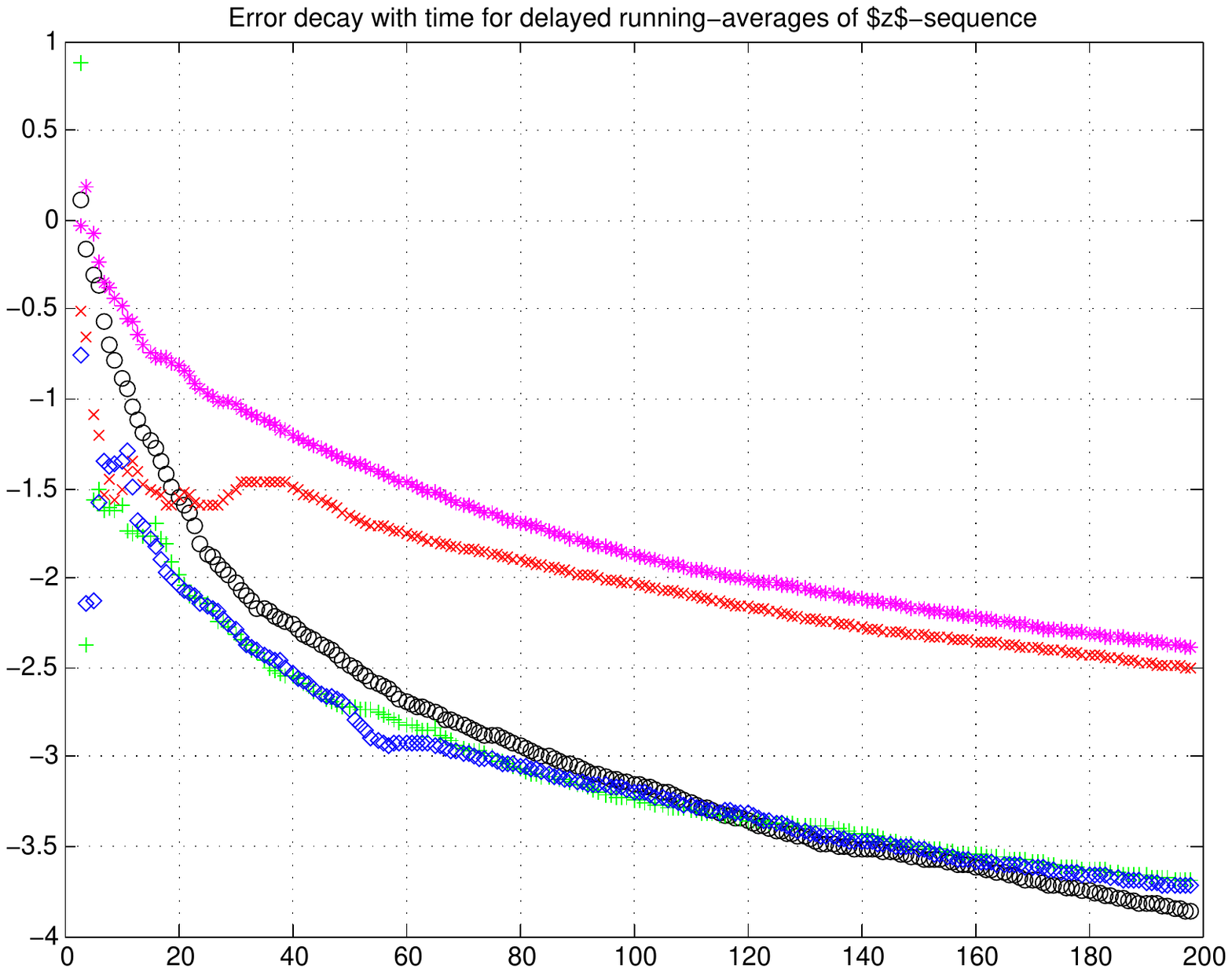}
\vskip -5pc\caption{Top plot: the number of iterations ($x$-axis) and the average of $\ln |\widetilde{z_i}(t) - \theta^*|$
($y$-axis) over 25 Monte Carlo runs for 5 randomly chosen nodes. 
Bottom plot: a sample of one a single run for the same node selection.}
\label{f1}
\end{figure} 

\begin{figure}[h!]
\vskip -4pc
\centering
{\includegraphics[scale=0.37]{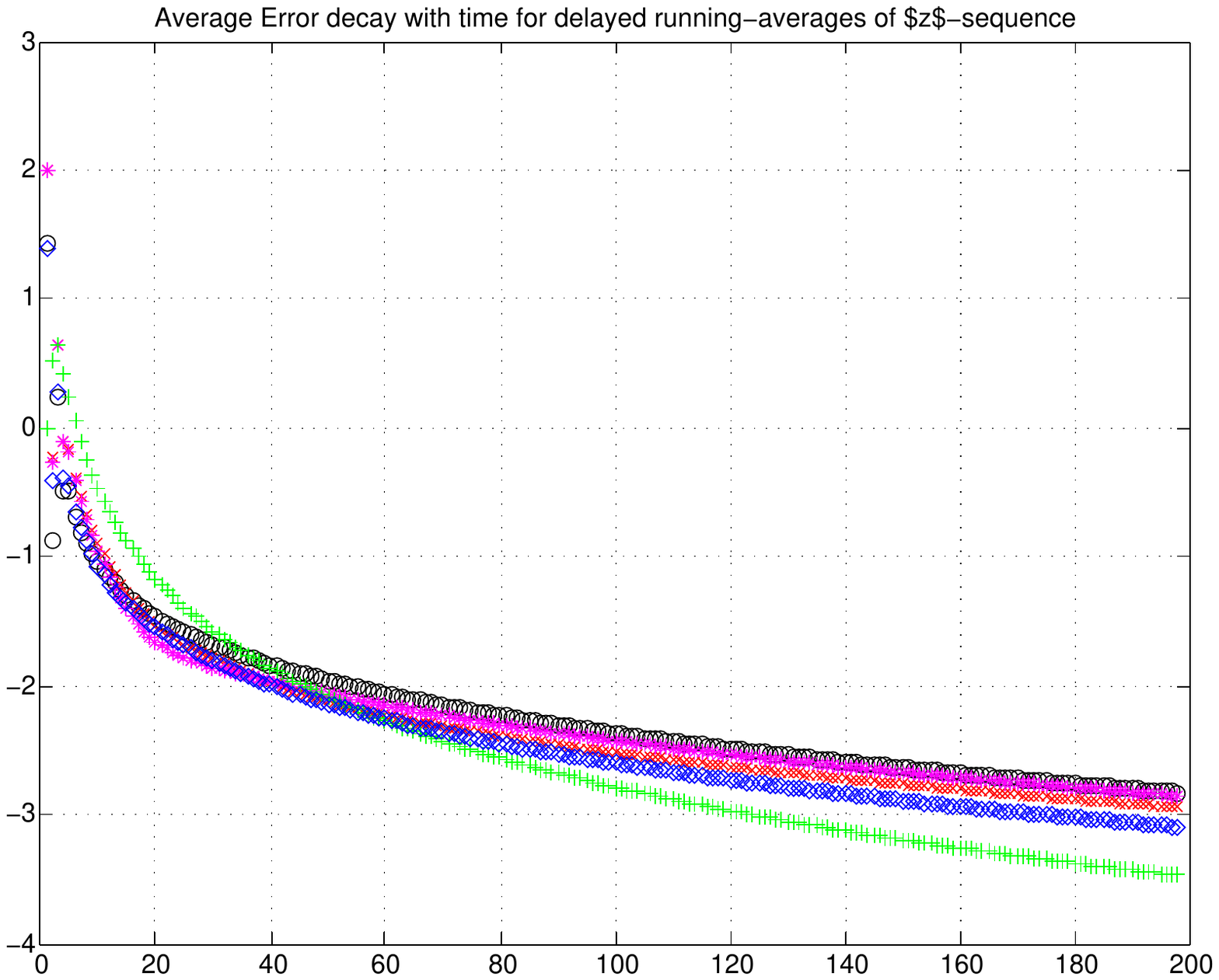}}
\vskip -10pc
{\includegraphics[scale=0.37]{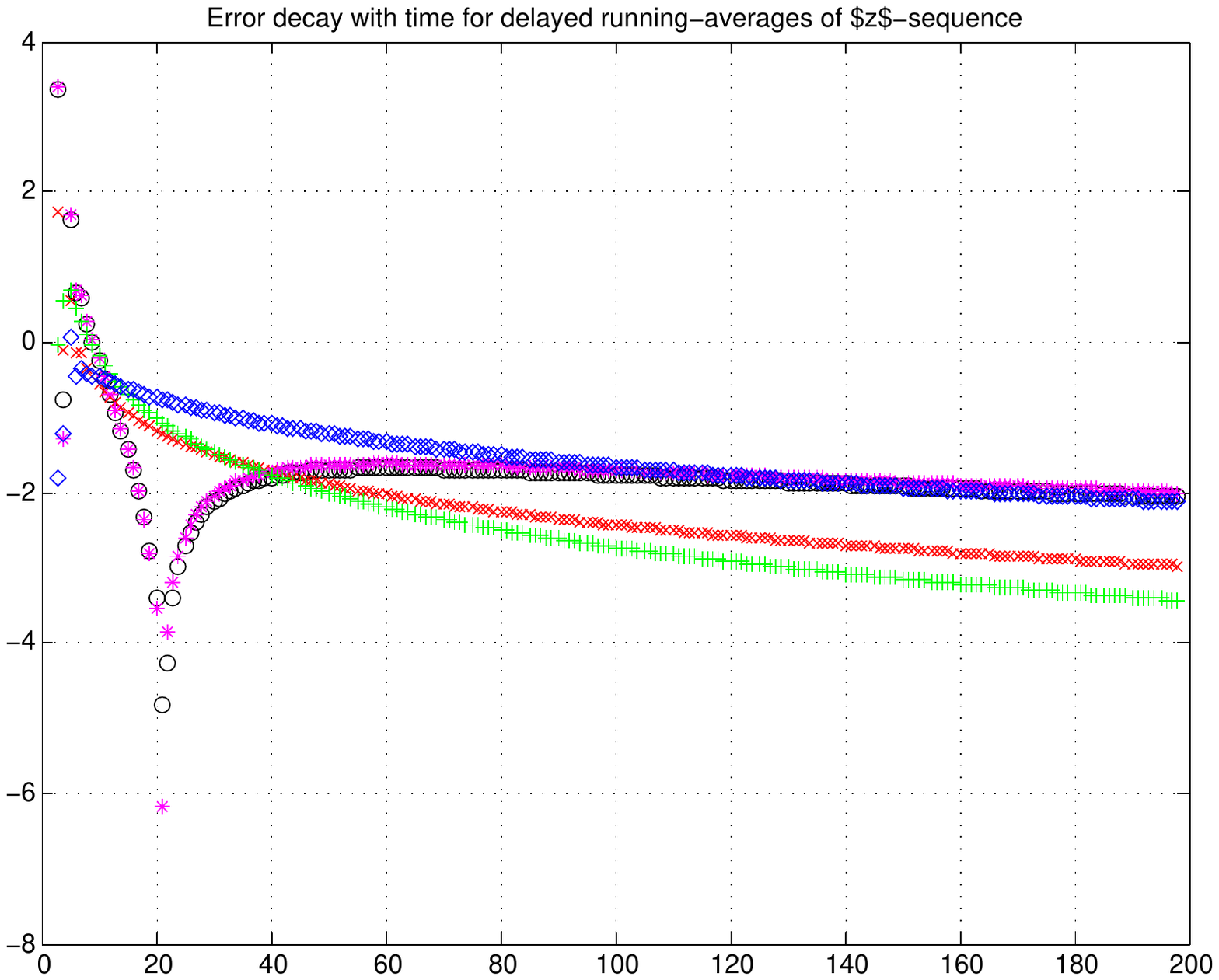}}
\vskip -5pc
\caption{Top plot: the number of iterations ($x$-axis) and the average of $\ln |\widetilde{z_i}(t) - \theta^*|$
($y$-axis) over 25 Monte Carlo runs for 5 randomly chosen nodes. 
Bottom plot: a sample of a single run for the same node selection.}
\end{figure}

We see that the error decays at a fairly speedy rate, especially given both the relatively large number of 
nodes in the system (a thousand) and the sparsity of the graph at each stage (every node has two out-neighbors).
Our simulation results suggest  the gradient-push methods we have proposed have the potential to be
effective tools for network optimization problems. For example, the simulation of Figure 1 shows that a relatively fast
convergence time can be obtained if each node can support only a single long-distance out-link.

 \section{Conclusion}\label{sec:concl}
 We have considered a variant of the subgradient-push method of our prior work~\cite{AA2013},
 where the nodes have access to noisy subgradients of their individual objective functions $f_i$. 
Our main result was that the functions $f_i$ are strongly convex functions with Lipchitz gradients, we have established 
 $O(\ln t/t)$ convergence rate of the method, which is an improvement of the previously known 
 rate $O(\ln t/\sqrt{t})$ for (noiseless) subgradient-push method shown in~\cite{AA2013}.
 %We note that the boundedness condition on the gradient noise in relation~\eqref{eq:noise} can be required to hold
 %with probability~1 instead. In this case, following our analysis with minor adjustments, one can see that 
 %Theorems 1 and~2 will also hold with probability~1.

Our work suggests a number of open questions. Our bounds on the performance of the (sub)gradient-push directly involve the convergence speed 
$\lambda$ of consensus on directed graphs. Thus the problem of designing well-performing consensus algorithms is further motivated by this work. In particular, a directed average consensus algorithm with polynomial scaling with $n$ on arbitrary time-varying graphs would lead to polynomial convergence-time scalings for distributed optimization over time-varying directed graphs. However, such an algorithm is not available to the best of the authors' knowledge.

Moreover, it would be interesting to relate the convergence speed of distributed optimization procedures to the properties possessed by the individual functions. We have begun on this research program here by showing an improved rate for strongly convex functions with Lipschitz gradients. However, one might expect that stronger results might be available under additional assumptions. It is not clear, for example, under what conditions a geometric rate can be achieved when graphs are directed and time-varying, if at all.

\aor{Finally, in many applications convergence speed should be measured not by the number of iterations but by different metrics. 
For example, it may be appropriate to count the number of bits that have to be exchanged before all nodes are close to the solution. Alternatively, 
when some of the variables correspond to physical positions which must be adjusted as a result of the protocol, the dominating
factor may be the total distance traveled by each node. Furthermore, there may be tradeoffs between these metrics that 
we do not at present understand. Understanding the performance of protocols for convex optimization in these scenarios remains an open problem.} 

\bibliographystyle{plain}
\bibliography{directed-opt}

\begin{thebibliography}{10}

\bibitem{ABRW}
A~Agarwal, P.~L. Bartlett, P.~Ravikumar, and M.~J. Wainwright.
\newblock Information-theoretic lower bounds on the oracle complexity of
  stochastic convex optimization.
\newblock {\em IEEE Transactions on Information Theory}, 58(5):pp. 3235--3249,
  2012.

\bibitem{benezit}
F.~Benezit, V.~Blondel, P.~Thiran, J.~Tsitsiklis, and M.~Vetterli.
\newblock Weighted gossip: distributed averaging using non-doubly stochastic
  matrices.
\newblock In {\em Proceedings of the 2010 IEEE International Symposium on
  Information Theory}, Jun. 2010.

\bibitem{Bertsekas:1997}
D.~P. Bertsekas and Tsitsiklis~J. N.
\newblock {\em Parallel and Distributed Computation: Numerical Methods}.
\newblock Athena Scientific, Belmont, 1997.

\bibitem{BNO}
D.P. Bertsekas, A.~Nedi\'{c}, and A.E. Ozdaglar.
\newblock {\em Convex analysis and optimization}.
\newblock Athena Scientific, 2003.

\bibitem{ChenSayed2012}
J.~Chen and A.~H. Sayed.
\newblock Diffusion adaptation strategies for distributed optimization and
  learning over networks.
\newblock {\em IEEE Transactions on Signal Processing}, 60(8):4289--4305, 2012.

\bibitem{hadj1}
A.~D. Dominguez-Garcia and C.N. Hadjicostis.
\newblock Distributed matrix scaling and application to average consensus on
  directed graphs.
\newblock {\em IEEE Transactions on Automatic Control}, 58(3):667--681, 2013.

\bibitem{dominguez}
A.D. Dominguez-Garcia and C.~Hadjicostis.
\newblock Distributed strategies for average consensus in directed graphs.
\newblock In {\em Proceedings of the IEEE Conference on Decision and Control},
  Dec 2011.

\bibitem{hadj3}
A.D. Dominguez-Garcia, C.N. Hadjicostis, and N.F. Vaidya.
\newblock Reselient networked control of distributed energy resources.
\newblock {\em IEEE Journal on Selected Areas in Communications},
  30(6):1137--1148, 2012.

\bibitem{Duchi2012}
J.C. Duchi, A.~Agarwal, and M.J. Wainwright.
\newblock Dual averaging for distributed optimization: Convergence analysis and
  network scaling.
\newblock {\em IEEE Transactions on Automatic Control}, 57(3):592 --606, 2012.

\bibitem{gharesi3}
B.~Gharesifard and J.~Cortes.
\newblock Distributed strategies for making a digraph weight-balanced.
\newblock In {\em Proceedings of the 47th Annual Allerton Conference on
  Communication, Control, and Computing}, pages 771--777, 2009.

\bibitem{gharesi1}
B.~Gharesifard and J.~Cortes.
\newblock Distributed continuous-time convex optimization on weight-balanced
  digraphs.
\newblock {\em IEEE Transactions on Automatic Control}, 59(3):781--786, 2014.

\bibitem{gharesi2}
B.~Gharesifard, B.~Touri, T.~Basar, and C.~Langbort.
\newblock Distributed optimization by myopic strategic interactions and the
  price of heterogeneity.
\newblock In {\em Proceedings of the 52nd IEEE Conference on Decision and
  Control}, 2013.

\bibitem{ciblat}
F.~Iutzeler, P.~Bianchi, P.~Ciblat, and W.~Hachem.
\newblock Asynchronous distributed optimization using a randomized alternating
  direction method of multipliers.
\newblock In {\em Proceedings of the IEEE Conference on Decision and Control},
  2013.

\bibitem{johansson}
B.~Johansson.
\newblock {\em On distributed optimization in networked systems}.
\newblock PhD thesis, Royal Institute of Technology (KTH), tRITA-EE 2008:065,
  2008.

\bibitem{johan1}
B.~Johansson, T.~Kevizky, M.~Johansson, and K.H. Johansson.
\newblock Subgradient methods and consensus algorithms for solving convex
  optimization problems.
\newblock In {\em Proceedings of the IEEE Conference on Decision and Control},
  2008.

\bibitem{dobra-kempe}
D.~Kempe, A~Dobra, and J.~Gehrke.
\newblock Gossip-based computation of aggregate information.
\newblock In {\em Proceedings of the 44th Annual IEEE Symposium on Foundations
  of Computer Science}, pages 482--491, Oct. 2003.

\bibitem{li-han}
H.~Li and Z.~Han.
\newblock Competitive spectrum access in cognitive radio networks: graphical
  game and learning.
\newblock In {\em IEEE Wireless Communications and Networking Conference},
  pages 1--6, 2010.

\bibitem{Lobel2011}
I.~Lobel and A.~Ozdaglar.
\newblock Distributed subgradient methods for convex optimization over random
  networks.
\newblock {\em IEEE Transactions on Automatic Control}, 56(6):1291 --1306, June
  2011.

\bibitem{LobelOF2011}
I.~Lobel, A.~Ozdaglar, and D.~Feijer.
\newblock Distributed multi-agent optimization with state-dependent
  communication.
\newblock {\em Mathematical Programming}, 129(2):255--284, 2011.

\bibitem{LopesSayed2007}
C.~Lopes and A.H. Sayed.
\newblock Incremental adaptive strategies over distributed networks.
\newblock {\em IEEE Transactions on Signal Processing}, 55(8):4046--4077, 2007.

\bibitem{Lu2012}
J.~Lu and C.~Y. Tang.
\newblock Zero-gradient-sum algorithms for distributed convex optimization: The
  continuous-time case.
\newblock {\em IEEE Transactions on Automatic Control}, 57(9):2348--2354, 2012.

\bibitem{nedic-broadcast}
A.~Nedi\'c.
\newblock Asynchronous broadcast-based convex optimizatio over a network.
\newblock {\em IEEE Transactions on Automatic Control}, 56(6):1337--1351, 2011.

\bibitem{ANSL2013}
A.~Nedi\'c and S.~Lee.
\newblock On stochastic subgradient mirror-descent algorithm with weighted
  averaging.
\newblock {\em SIAM Journal on Optimization}, 24(1):84--107, 2014.

\bibitem{our_conf}
A.~Nedic and A.~Olshevsky.
\newblock Distributed optimization of strongly convex functions over
  time-varying graphs.
\newblock In {\em Proceedings of the IEEE Global Conference on Signal and
  Information Processing}, 2013.

\bibitem{AA2013}
A.~Nedi\'c and A.~Olshevsky.
\newblock Distributed optimization over time-varying directed graphs.
\newblock available at: http://arxiv.org/abs/1303.2289, 2013.

\bibitem{noot09}
A.~Nedi\'c, A.~Olshevsky, A.~Ozdaglar, and J.N. Tsitsiklis.
\newblock On distributed averaging algorithms and quantization effects.
\newblock {\em IEEE Transactions on Automatic Control}, 54(11):2506--2517,
  2009.

\bibitem{AN2009}
A.~Nedi\'c and A.~Ozdaglar.
\newblock Distributed subgradient methods for multi-agent optimization.
\newblock {\em IEEE Transactions on Automatic Control}, 54(1):48 --61, Jan.
  2009.

\bibitem{neglia}
G.~Neglia, G.~Reina, and S.~Alouf.
\newblock Distributed gradient optimization for epidemic routing: A preliminary
  evaluation.
\newblock In {\em IEEE Wireless Days, 2nd IFIP}, pages 1--6, 2009.

\bibitem{NY}
A.~S. Nemirovski and D.~B. Yudin.
\newblock {\em Problem complexity and method efficiency in optimization}.
\newblock John Wiley, 1983.

\bibitem{othesis}
A.~Olshevsky.
\newblock {\em Efficient information aggregation for distributed control and
  signal processing}.
\newblock PhD thesis, MIT, 2010.

\bibitem{rabbat}
M.~Rabbat and R.D. Nowak.
\newblock Distributed optimization in sensor networks.
\newblock In {\em IPSN}, pages 20--27, 2004.

\bibitem{ram_info}
S.S. Ram, V.V. Veeravalli, and A.~Nedi\'c.
\newblock Distributed non-autonomous power control through distributed convex
  optimization.
\newblock In {\em IEEE INFOCOM}, pages 3001--3005, 2009.

\bibitem{SN2011}
K.~Srivastava and A.~Nedi\'c.
\newblock Distributed asynchronous constrained stochastic optimization.
\newblock {\em IEEE J. Sel. Topics. Signal Process.}, 5(4):772--790, 2011.

\bibitem{Tsianos2013}
K.I. Tsianos.
\newblock {\em The role of the Network in Distributed Optimization Algorithms:
  Convergence Rates, Scalability, Communication / Computation Tradeoffs and
  Communication Delays}.
\newblock PhD thesis, McGill University, Dept. of Electrical and Computer
  Engineering, 2013.

\bibitem{rabbat_allerton2012}
K.I. Tsianos, S.~Lawlor, and M.G. Rabbat.
\newblock Consensus-based distributed optimization: Practical issues and
  applications in large-scale machine learning.
\newblock In {\em Proceedings of the 50th Allerton Conference on Communication,
  Control, and Computing}, 2012.

\bibitem{rabbat_cdc2012}
K.I. Tsianos, S.~Lawlor, and M.G. Rabbat.
\newblock Push-sum distributed dual averaging for convex optimization.
\newblock In {\em Proceedings of the IEEE Conference on Decision and Control},
  2012.

\bibitem{Tsianos2011}
K.I. Tsianos and M.G. Rabbat.
\newblock Distributed consensus and optimization under communication delays.
\newblock In {\em Proc. of Allerton Conference on Communication, Control, and
  Computing}, pages 974�--982, 2011.

\bibitem{Tsianos2012}
K.I. Tsianos and M.G. Rabbat.
\newblock Distributed strongly convex optimization.
\newblock In {\em Proc. of Allerton Conference on Communication, Control, and
  Computing}, 2012.

\bibitem{rabbat_2013}
K.I. Tsianos and M.G. Rabbat.
\newblock Simple iteration-optimal distributed optimization.
\newblock In {\em Proceedings of the European Conference on Signal Processing},
  2013.

\bibitem{hadj2}
N.~H. Vaidya, C.N. Hadjicostis, and A.D. Dominguez-Garcia.
\newblock Robust average consensus over packet dropping links: analysis via
  coefficients of ergodicity.
\newblock In {\em Proceedings of the 51st Annual Conference on Decision and
  Control}, pages 2761--2766, 2012.

\bibitem{ermin}
E.~Wei and A.~Ozdaglar.
\newblock On the {O}(1/k) convergence of asynchronous distributed alternating
  direction method of multipliers.
\newblock In {\em Proceedings of the IEEE Global Conference on Signal and
  Information Processing}, 2013.

\bibitem{johan2}
B.~Yang and M.~Johansson.
\newblock Distributed optimization and games: a tutorial overview.
\newblock In {\em Networked Control Systems}, 2010.

\end{thebibliography}

\end{document}